\DeclareSymbolFont{cyrletters}{OT2}{wncyr}{m}{n}
\DeclareMathSymbol{\Sha}{\mathalpha}{cyrletters}{"58}
\definecolor{refkey}{rgb}{1,1,1}
\definecolor{labelkey}{rgb}{1,1,1}
\definecolor{cite}{rgb}{0.9451,0.2706,0.4941}
\definecolor{ruri}{rgb}{0.0078,0.4022,0.8010}
\makeindex \setcounter{tocdepth}{1}
\def\F{{\rm \mathbb{F}}}
\def\Z{{\rm \mathbb{Z}}}
\def\Q{{\rm \mathbb{Q}}}
\def\C{{\rm \mathbb{C}}}
\def\R{{\rm \mathbb{R}}}
\def\P{{\rm \mathbb{P}}}
\def\p{{\rm \mathfrak{p}}}
\def\O{{\rm \mathcal{O}}}
\def\A{{\rm \mathbb{A}}}
\def\Nm{{\rm Nm}}
\def\avg{{\rm avg}}
\def\Cl{{\rm Cl}}
\def\Disc{{\rm Disc}}
\def\SL{{\rm SL}}
\def\Sym{{\rm Sym}}
\def\GL{{\rm GL}}
\def\Gal{{\rm Gal}}
\def\Sel{{\rm Sel}}
\def\Spec{{\rm Spec}}
\newcommand{\minus}{\scalebox{0.70}[1.0]{$-$}}
\newcommand{\eps}{\varepsilon}
\newcommand{\rank}{\mathop{\mathrm{rk}}}
\newcommand{\Prob}{\mathrm{Prob}}
\numberwithin{equation}{section}
\newtheorem{theorem}{Theorem}
\newtheorem{lemma}[theorem]{Lemma}
\newtheorem{corollary}[theorem]{Corollary}
\newtheorem{proposition}[theorem]{Proposition}
\def\shownotes{\def\inline##1##2##3{ \begin{adjustwidth}{3mm}{7mm}\mbox{}\par \noindent
{\color{##1}\hspace{-1.9cm}{\large ##2}\vspace{-\baselineskip}\\##3}
\newline\end{adjustwidth}} \def\inlinewide##1##2##3{ \begin{adjustwidth}{0mm}{0cm}\mbox{}\par \noindent
{\color{##1}\hspace{-1.6cm}{\large ##2}\vspace{-\baselineskip}\\##3}
\newline\end{adjustwidth}}  \def\marg##1##2##3{\marginnote{\color{##1}{\large ##2}\\{\small ##3}}[-.8cm]}}
\let\@@pmod\pmod
\DeclareRobustCommand{\pmod}{\@ifstar\@pmods\@@pmod}
\def\@pmods#1{\mkern4mu({\operator@font mod}\mkern 6mu#1)}
\begin{document}
\setlength{\parskip}{2pt} 
\setlength{\parindent}{22.5pt}
\title{A positive proportion of quartic fields are not monogenic yet have no local obstruction to being~so}



\author{Levent Alp\"oge}
\author{Manjul Bhargava}
\author{Ari Shnidman}

\begin{abstract}
We show that a positive proportion of quartic fields are not monogenic, despite having no local obstruction to being monogenic. Our proof builds on the corresponding result for cubic fields that we obtained in a previous work.  Along the way, we also prove that a positive proportion of quartic rings of integers do not arise as the invariant order of an integral binary quartic form despite having no local obstruction.  
\end{abstract}

\maketitle

\vspace{-.25in}

\section{Introduction}
In a previous paper~\cite{ABS}, we proved that a positive proportion of cubic fields are not monogenic despite having no local obstruction to being monogenic.  
The  purpose of this paper is to prove the analogous result for quartic fields. 

\begin{theorem}\label{S4main}
Let $r = 0,\, 2,$ or $4$. When isomorphism classes of quartic fields with $r$ real embeddings are ordered by absolute discriminant, a positive proportion are not monogenic and yet have no local obstruction to being monogenic. 
\end{theorem}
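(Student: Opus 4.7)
The approach closely parallels our earlier work on the cubic case in~\cite{ABS}. The starting point is the classical fact that a quartic order is monogenic if and only if it is the invariant order of an integral binary quartic form with leading coefficient $\pm 1$; it is therefore natural to first study, as in the abstract, the \emph{weaker} notion of being the invariant order of some (not necessarily monic) integral binary quartic form. For each quartic field $K$, the $\GL_2(\Z)$-equivalence classes of integral binary quartic forms whose invariant order equals $\O_K$ form a finite set $H(K)$, which I would describe as a torsor for a finite abelian group built out of the $2$-torsion of the class group of the cubic resolvent $R$ of $K$, together with unit-group data. There is an analogous local object $H_v(K)$ at each place $v$, and $\O_K$ arises as an invariant order exactly when a distinguished class in $\prod_v H_v(K)$ lifts to a class in $H(K)$.

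The first goal is to establish the intermediate statement of the abstract. I would show that for a positive proportion of quartic fields $K$ of each signature (ordered by $|\Disc(K)|$), the local obstruction vanishes while the global lift fails. For the lower bound on the density of locally unobstructed fields, I would impose suitable congruence conditions at a small set of primes, using Bhargava's parametrization of quartic rings by pairs of integral ternary quadratic forms together with the associated uniformity-of-counts machinery. For the upper bound ruling out that all such fields are invariant orders of binary quartic forms, I would count $\GL_2(\Z)$-orbits of integral binary quartic forms of bounded discriminant whose invariant order is a maximal quartic order, using orbit-counting estimates of Bhargava--Shankar type, and compare this to the total count of locally unobstructed quartic fields. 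The discrepancy furnishes the desired positive-density set.

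To upgrade from the invariant-order statement to Theorem~\ref{S4main}, the key observation is that any quartic field which fails to be the invariant order of \emph{any} integral binary quartic form is a fortiori not monogenic; so the positive-density set constructed above already yields non-monogenic fields. What remains is to verify that the (slightly stronger) local conditions ensuring absence of a local obstruction to monogenicity---essentially, the existence of a $\pm 1$ leading coefficient locally---continue to hold on a positive-density subfamily of our fields. This reduces to a direct archimedean and $p$-adic density computation, exactly analogous to the one carried out in~\cite{ABS}, and should pose no serious difficulty once the setup is in place.

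The main obstacle, I expect, is Step~2 above, and specifically the two-layered class-group bookkeeping that is new in the quartic setting: since quartic rings are parametrized by pairs of ternary quadratic forms rather than a single binary form, the group $H(K)$ governing invariant orders is tied to the resolvent cubic $R$ rather than to $\O_K$ itself, and one must therefore simultaneously average over quartic fields and control a Selmer-like quotient of a class group of an auxiliary cubic. Carrying this out uniformly across the three signatures $r = 0, 2, 4$, and verifying that the local image of $H(K)$ is provably a proper subgroup of $\prod_v H_v(K)$ on a set of positive density, will be the technical heart of the argument.
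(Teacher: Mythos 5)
The proposal diverges substantially from the paper's argument, and several of its load-bearing steps are either unjustified or, as far as I can tell, wrong.

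First, the central mechanism is missing. The paper does not compare a raw count of $\GL_2(\Z)$-orbits of binary quartic forms against a count of fields; instead it exploits the implication (due to Wood) that if a quartic ring is binary then its cubic resolvent ring is monogenic, and then imports the $3$-descent machinery of~\cite{ABS}: the index form of the cubic resolvent $R_L$ produces an element of $\Sel_3(E^{\Disc L})$ for $E^D\colon y^2=4x^3+D$, which must lie in $E(\Q)/3E(\Q)$ if $R_L$ is monogenic, and averaging $3^{\rank E^D}$ over the discriminant family bounds the number of monogenic resolvents. Your plan to "count $\GL_2(\Z)$-orbits of integral binary quartic forms of bounded discriminant whose invariant order is maximal, and compare to the count of locally unobstructed fields" has no evident route to a strict inequality: it is precisely the conjecture, noted in the introduction, that $0\%$ of quartic fields are binary, and nothing in Bhargava--Shankar orbit-counting gives this for free. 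The discrepancy you want is exactly what the resolvent-plus-$3$-descent argument supplies, and you don't identify that reduction as the key lemma.

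Second, the description of $H(K)$ as a torsor for a group built from $\Cl(R)[2]$ is not a structure I can recognize as correct. The paper's Proposition~\ref{characterization of invariant orders of binary quartics} shows that binary realizations of $Q$ correspond to monogenizations of the resolvent $R$ (i.e.\ to $\GL_2(\Z)$-classes of solutions of the cubic Thue equation $f_R(x,y)=\pm1$), and these do not naturally form a torsor under $2$-torsion of a class group. Third, the local analysis you gesture at is much subtler than "a direct archimedean and $p$-adic density computation": the index form of a quartic ring is a ternary sextic, and the crucial tool (Fess's identity $f_{Q_F}(x^2,xy,y^2)=F(x,y)^3$) reduces its local solubility to that of $z^2=F(x,y)$, which is then handled prime by prime; moreover the resolvent ring of $\O_L$ need not equal $\O_K$ (Lemma~\ref{lem:index2} shows the index can be $1$, $2$, or $8$), which has to be tracked. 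Finally, you do not address how to produce each signature $r\in\{0,2,4\}$: the paper's construction starts from cubic fields $K$ and non-square norm-one units $u$ and forms $L_u\subset$ the Galois closure of $K(\sqrt u)$, so hitting signatures $(4,0)$ and $(0,2)$ requires controlling the signs of such units (Bhargava--Varma) and, for the totally real case, a refinement of Klagsbrun's $S$-class-group averages; none of this appears in your sketch. In short, while the slogan "not binary $\Rightarrow$ not monogenic" is the right top-level reduction, the proposal lacks the construction $K\rightsquigarrow L_u$, the Wood/Fess reductions, and the $3$-descent input that actually make the positive-proportion statement provable.
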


\noindent In other words, we prove that a positive proportion of quartic fields, up to isomorphism, are non-monogenic for truly global reasons. Moreover, we exhibit such  positive proportions of non-monogenic quartic fields having each of the three possible real signatures.

To state precisely what it means  for a number field of degree $n$ to have ``no local obstruction to being monogenic'',  recall  the {\it index
form} $f_K:\O_K/\Z \to \wedge^{n}\O_K$ of $K$  defined by $\alpha\mapsto 1\wedge\alpha\wedge\alpha^2\wedge\cdots\wedge\alpha^{n-1}$, which may be viewed as a homogeneous form of degree~$n \choose 2$ in $n-1$ variables.
A number field $K$ is monogenic if and only if $f_K$ represents~$\pm1$ over $\Z$.  We~say that $K$ {\it has no local obstruction to being monogenic}
if $f_K$ represents~$1$ over $\Z_p$ for all primes~$p$, or represents~$-1$ over $\Z_p$ for all primes~$p$.
We note that the index form of a number field~$K$ always represents~$+1$ and $-1$ over $\R$ (see~Lemma~\ref{lem:infinity}), and hence it is not necessary to consider obstructions to monogenicity over $\R$.

Our proof of Theorem \ref{S4main} actually proves something stronger.  Recall that a homogeneous polynomial $F \in \Z[x,y]$ of degree $n$ determines a based ring $R_F$ of rank $n$ --- called the {\it invariant order of $F$} --- that satisfies  $\Disc(R_F) = \Disc(F)$.  If $F(x,y) =\sum_{i = 0}^n a_i x^{n-i}y^i$ is irreducible of degree $n$ and $\theta$ is a root of $F(x,1)$, then a $\Z$-basis for $R_F$ is $\langle 1, a_0\theta, a_0\theta^2 + a_1\theta, \ldots, \sum_{i = 0}^{n-2} a_i\theta^{n-1 - i}\rangle$; see~\cite{BM,Nakagawa}.
In coordinate-free terms, $R_F$ is the ring of global  functions on the $\Z$-scheme $\mathrm{Proj}\, \Z[x,y]/(F)$;
see \cite{Wood-binary}. 

 We say that a ring $R$ is {\it binary} if $R$ is isomorphic to the invariant order of some homogeneous form $F(x,y) \in \Z[x,y]$.  A number field $K$ is {\it binary} if $\O_K$ is.  If $K$ is monogenic, i.e., $\O_K\simeq \Z[x]/(f(x))$ for a monic integer polynomial $f$, then $K$ is also binary, as it is  isomorphic to the invariant order of the homogenization $F(x,y)$ of $f$.  However, there are many binary number fields that are not monogenic.  In fact, every ring of rank $n \leq 3$ is binary, despite the fact that many cubic fields are not monogenic~\cite{ABS}.  On the other hand, for $n \geq 4$, it is natural to conjecture that, when ordered by discriminant, $100\%$ of number fields of degree $n$ are not binary, though it was not previously known that even a positive proportion of degree~$n$ number fields are not binary.   
 
 Our second main theorem states that a {\it positive proportion} of quartic fields  are not binary, despite having no local obstruction to being binary; thus they are not binary for truly global reasons. In fact, we show that this holds even if we restrict to  quartic fields having any fixed real signature:  

\begin{theorem}\label{thm:notbinary}
Let $r = 0,\, 2,$ or $4$. When isomorphism classes of quartic fields with $r$ real embeddings are ordered by absolute discriminant, a positive proportion are not binary and yet have no local obstruction to being
binary. 
\end{theorem}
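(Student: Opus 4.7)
Building on the cubic-field result of \cite{ABS}, my plan is to use the Bhargava-Wood parametrization of integral binary quartic forms in terms of quartic rings together with cubic-resolvent data to translate ``$\mathcal{O}_K$ is binary'' into a statement involving the cubic resolvent ring of $\mathcal{O}_K$.  I then apply the cubic non-monogenicity result of \cite{ABS} and sieve out the $2$-adic local obstruction to obtain a positive proportion of quartic fields of each signature that are non-binary yet have no local obstruction.

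\textbf{Reduction via cubic resolvents.} By the parametrization of $\GL_2(\Z)$-orbits of integral binary quartic forms (Wood, building on Bhargava's higher composition laws), such orbits correspond to triples $(Q,C,\phi)$ consisting of a quartic ring $Q$, a cubic resolvent $C$ of $Q$, and extra data $\phi$ that, for $Q=\mathcal{O}_K$ maximal, amounts to a monogenic generator of $C$ (an element $\beta\in C$ with $C=\Z[\beta]$ arising from the cubic resolvent polynomial of $F$).  Consequently, in order for $\mathcal{O}_K$ to be binary, some cubic resolvent ring $C$ of $\mathcal{O}_K$ --- in the $S_4$-case, typically $\mathcal{O}_{K'}$ for $K'$ the cubic resolvent field --- must be monogenic as a cubic ring.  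By the main theorem of \cite{ABS}, a positive proportion of cubic fields have non-monogenic rings of integers.  Via Bhargava's joint count of (quartic field, cubic resolvent) pairs, the assignment $K \mapsto K'$ distributes $K'$ among cubic fields with the right density to deduce from \cite{ABS} that a positive proportion of $S_4$-quartic fields $K$ have $\mathcal{O}_{K'}$ non-monogenic, and therefore $\mathcal{O}_K$ non-binary; a parallel argument handles $D_4$-quartics via their cubic resolvent algebras.

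\textbf{Local obstruction at $p=2$.} For any prime $p\geq 3$, $|\P^1(\F_p)|=p+1\geq 4$ and there are enough higher-degree closed points to realize every quartic $\F_p$-algebra as $R_{F_p}\otimes\F_p$; Hensel's lemma then lifts this to $\Z_p$, so no local obstruction to being binary occurs at primes $p\geq 3$.  At $p=2$, $|\P^1(\F_2)|=3$ and there is a single closed point of degree $2$, so precisely the splitting types $\mathcal{O}_K\otimes\Z_2\cong\Z_2^{4}$ (where $2$ splits completely) and $\mathcal{O}_K\otimes\Z_2\cong L_{\mathrm{unr}}\times L_{\mathrm{unr}}$ (where $2$ factors as two unramified degree-$2$ primes) are locally obstructed.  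By Bhargava's theorem on counts of quartic fields with prescribed local behavior at $2$, a positive proportion of quartic fields of each signature $r\in\{0,2,4\}$ avoid both these splitting types and hence have no local obstruction to being binary.

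\textbf{Conclusion and main obstacle.} Combining these two inputs, for each signature $r\in\{0,2,4\}$ a positive proportion of quartic fields of signature $r$ are non-binary (via the reduction to \cite{ABS}) and have no local obstruction (by the sieve at $p=2$), establishing Theorem \ref{thm:notbinary}.  The principal difficulty is the reduction step: verifying that the Wood/Bhargava parametrization genuinely requires a monogenic structure on the cubic resolvent (so that non-monogenicity of $\mathcal{O}_{K'}$ rules out binary-ness of $\mathcal{O}_K$), and controlling the joint distribution of $K$'s local behavior at $2$ with the monogenicity of its resolvent $K'$ to ensure that the two positive-density conditions have nontrivial positive-density intersection.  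The local obstruction analysis and the $2$-adic sieve are standard once the reduction is in hand.
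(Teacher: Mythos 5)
Your high-level reduction — if $\mathcal{O}_L$ is binary then its cubic resolvent ring is monogenic, so non-monogenicity of a cubic resolvent rules out binary-ness — is the same one the paper uses (Proposition~\ref{prop: quarticcubic} and \cite{Wood-monogenic}).  However, the proposal as written leaves out almost all of the content that makes the argument actually close, and in places is incorrect.  The most serious gaps:

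\textbf{The cubic resolvent ring need not be maximal.}  You repeatedly pass from the quartic field $L$ to its cubic resolvent field $K'$ and invoke the non-monogenicity of $\mathcal{O}_{K'}$ from \cite{ABS}.  But the object that controls whether $\mathcal{O}_L$ is binary is the cubic resolvent \emph{ring} of $\mathcal{O}_L$, which may be a proper suborder of $\mathcal{O}_{K'}$.  For the quartic fields $L_u$ the paper constructs (Galois-theoretically, via units $u \in \mathcal{O}_K^\times$), Lemma~\ref{lem:index2} shows the index is $2^r$ with $r\in\{0,1,3\}$; to then conclude non-monogenicity one must run a version of the \cite{ABS} argument for these non-maximal cubic orders, which is why the paper's Selmer-group bound in Section~\ref{sec:thm1proof} involves the three twists $E^D, E^{4D}, E^{64D}$ rather than a single elliptic curve.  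Your proposal has no mechanism for dealing with non-maximal resolvents.

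\textbf{The local obstruction analysis is wrong as stated, and you haven't addressed orientations.}  The condition ``no local obstruction to being binary'' is not the condition that $\mathcal{O}_L\otimes\Z_p$ is binary over $\Z_p$ for each $p$ separately; it requires a single orientation $\omega\in\wedge^4\mathcal{O}_L$ for which the oriented local rings are binary for \emph{all} $p$ simultaneously.  Your $\P^1(\F_p)$-counting argument ignores this sign-coherence constraint entirely, and the claim that for $p\geq 3$ ``no local obstruction to being binary occurs'' is asserted rather than proved.  In the paper, establishing the absence of local obstruction is the content of Proposition~\ref{unob'}, which requires the Fess identity (Theorem~\ref{dan's identity}), the curve-counting Lemma~\ref{lem:reps1}, the Hasse--Weil bound for $p>7$, and a built-in condition $\left(\frac{d}{7}\right)\neq 1$ in the very definition of $\Sigma_n$ to handle $p=7$.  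None of this is automatic.

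\textbf{You never establish the joint positive density.}  You correctly flag as ``the principal difficulty'' the problem of intersecting ``non-binary'' with ``locally unobstructed,'' but you don't resolve it.  The paper resolves it by a very specific construction: starting from the cubic family of \cite{ABS}, building the quartic fields $L_u$ from non-square units $u\in\mathcal{O}_K^\times$, proving \emph{all} of $F_4$ is locally unobstructed (Proposition~\ref{unob'}), and then using subadditivity of $x^{\log_3 2}$ together with \cite[Thm.~13]{ABS} to bound the number of binary ones.  There is no shortcut that avoids constructing an explicit family.

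\textbf{The signature cases $r=0$ and $r=4$ are missing.}  You gesture at ``Bhargava's theorem on counts of quartic fields with prescribed local behavior at $2$,'' but the sign of $\Disc(L_u)$ (hence the signature of $L_u$) is controlled by the real signature of the unit $u$, not by a $2$-adic condition.  Producing totally real non-binary quartics from this construction requires a positive proportion of the \emph{non-monogenic} cubic fields $K$ in the family to have a totally positive non-square unit.  The relevant unit-signature density from \cite{BV2} is only $\geq 1/2$, which is not a priori enough to overlap with the non-monogenic set (also of density roughly $1/2$), and the paper spends all of Section~\ref{sec:allsigs} (plus the Klagsbrun generalization of Section~\ref{aux}) forcing this overlap.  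Your proposal does not engage with this at all, and the parenthetical about ``a parallel argument handles $D_4$-quartics'' is not used in (and is not needed for) the paper's proof, which works entirely inside the $S_4$ locus.
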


Just as for monogenicity, the condition that a rank $n$ ring $R$ has {\it no local obstruction to being binary} is stronger than simply demanding that $R \otimes_\Z \Z_p$ is binary over $\Z_p$ for all~$p$. To state the condition, we use the notion of an orientation of a ring $R$ of rank~$n$ over a principal ideal domain~$S$: an {\it orientation} of $R$ is a generator $\omega$ of the $S$-module~$\wedge^n R$. An {\it oriented ring of rank~$n$} over $S$ is then a pair $(R,\omega)$, where $R$ is a ring of rank $n$ over $S$ and $\omega$ is an orientation of $R$.  If $F(x,y) = \sum_{i=0}^n a_i x^{n-i}y^i\in S[x,y]$ is a binary form, then the ring $R_F$ naturally  carries the orientation $\omega_F = 1 \wedge a_0 \theta \wedge (a_0\theta^2+a_1\theta) \wedge \cdots \wedge (a_0\theta^{n-1}+\cdots+ a_{n-2}\theta)$.  An oriented ring $(R,\omega)$ of rank~$n$ over $S$ is {\it binary} if there is an isomorphism $R \simeq R_F$ such that $\omega\mapsto \omega_F$ under this isomorphism.    

If $(R,\omega)$ is an oriented ring of rank $n$, then  $(R,\omega)$ {\it has no local obstruction to being binary} if the oriented ring $(R \otimes_\Z \Z_p, \omega \otimes 1)$ of rank $n$ over~$\Z_p$ is binary for all primes $p$.\footnote{We view $\omega \otimes 1 \in (\wedge^n R) \otimes_\Z \Z_p$ as an element of $\wedge^n(R \otimes_\Z \Z_p)$ via the canonical isomorphism.}  If $R$ is an (unoriented) ring of rank~$n$, we say that {\it $R$ has no local obstruction to being binary} if there exists an orientation $\omega$ such that $(R,\omega)$ has no local obstruction to being binary.  Since, for a ring of rank $n$ over~$\Z$, there are exactly two possible choices of orientation, this definition naturally extends the earlier-stated notion for a ring of rank $n$ to have no local obstruction to being monogenic. 

\subsection*{Geometric formulation}
Theorems \ref{S4main} and \ref{thm:notbinary} can be phrased as failures of local-to-global principles for certain embedding problems over $\Spec \, \Z$. Indeed, 
a number field $K$ is monogenic if and only if there is a closed immersion $\Spec \, \O_K \hookrightarrow \A^1_\Z = \Spec\, \Z[x]$, as giving such a closed immersion is equivalent to giving a surjection $\Z[x] \to \O_K$.
Similarly, a number field~$K$ is
binary if and only if there is a closed immersion $\Spec\, \O_K \hookrightarrow \P^1_\Z = \mathrm{Proj}\, \Z[x,y]$ (see~\cite[Prop.\ 2.9]{Wood-binary}). 
If $\omega \in \wedge^n_\Z\O_K$, then a closed immersion $\Spec \, \O_K \hookrightarrow\A^1_\Z$ is {\it $\omega$-oriented} if $\omega = \bigwedge_{i = 0}^{n-1} x^i$.
Similarly, $\Spec \, \O_K \hookrightarrow \P^1_\Z$ is {\it $\omega$-oriented} if $\omega = a_0^{n-1}\bigwedge_{i = 0}^{n-1} (x/y)^i$.  
 
In this language, the Levi--Delone--Faddeev  correspondence implies that every cubic field~$K$ admits exactly one closed immersion $\Spec \, \O_K \hookrightarrow \P^1_\Z$, up to automorphisms of $\P^1_\Z$. However,  \cite[Theorem~1]{ABS} states that for a positive proportion of cubic fields $K$, 
there does not exist an embedding of 
$\Spec\,\O_K\hookrightarrow \A^1_\Z$ even though there is no local obstruction to such an embedding.
 Similarly, Theorems~\ref{S4main} and \ref{thm:notbinary} of the current paper state that, for a positive proportion of {\it quartic} fields $K$, there does not exist a  closed immersion  $\Spec\, \O_K \hookrightarrow \A^1_\Z$
(resp.\ $\Spec\, \O_K \hookrightarrow \P^1_\Z$), despite there not being any local obstructions to the existence of such embeddings.
By  having no local obstruction, we mean that, for some  orientation $\omega \in \wedge^n_\Z \O_K$, there are  $\omega$-oriented closed immersions $\Spec \, \O_K \hookrightarrow\A^1_{\Z_p}$ (resp.\ $\Spec \, \O_K \hookrightarrow\P^1_{\Z_p}$) for all  $p$.  
  
\smallskip 
\subsection*{Methods}

While Theorems \ref{S4main} and \ref{thm:notbinary} are logically independent of each other, we prove them simultaneously by exhibiting a positive-density family of quartic fields that have no local obstruction to being monogenic (hence also have no local obstruction to being binary) and yet are not binary (hence are also not monogenic). Our method relies on connecting the  monogenicity of quartic rings to that of their cubic resolvent rings. Indeed, 
if a quartic ring is monogenic, then so is its cubic resolvent ring. More generally, if a quartic ring is binary, then its cubic resolvent ring is monogenic~\cite{Wood-monogenic}. 

It is thus natural to try and prove Theorems~\ref{S4main} and \ref{thm:notbinary} by considering the family~$\{K\}$ of non-monogenic cubic fields from~\cite{ABS}, and  for each such $K$ to construct a quartic field~$L$ such that the cubic resolvent ring of~$\O_L$ is~$\O_K$.  As such an $L$ would not be binary and have $\Disc(L) = \Disc(K)$, it would follow that a positive proportion of quartic fields are not binary (and hence not monogenic) when ordered by discriminant. There is even a natural candidate for $L$, namely the quartic field~$L_u$ (well defined up to conjugation) contained in the Galois closure of $K(\sqrt u)$, where $u \in \O_K^\times$ is any choice of non-square unit.   

There are a few obstacles in carrying out this plan. First, one cannot  guarantee that $\Disc(L_u) = \Disc(K)$ or, equivalently, that the cubic resolvent ring of $\O_{L_u}$ is the full ring of integers $\O_K$.  Indeed, given a cubic field  $K$, there might not be {any} quartic field $L$ with cubic resolvent field $K$ such that $\Disc(L)=\Disc(K)$. 
However, we have that  $\Disc(L_u)/\Disc(K)$ is uniformly bounded; we thus refine our proof in \cite{ABS} to prove that a positive proportion of the (possibly non-maximal) cubic resolvent rings of $\O_{L_u}$ are non-monogenic and yet have no local obstruction to being monogenic.

A more serious obstacle that arises is that we must prove that a positive proportion of the quartic fields $L_u$  themselves have no local obstruction to being monogenic. Indeed, it turns out that if $Q$ is a quartic ring with cubic resolvent ring $R$, then $Q$ can have a local obstruction to being monogenic  even if $R$ does not.  Thus, we must demonstrate the solubility of a positive proportion of the index form equations $f_{L_u}(x,y,z) = \pm 1$ over $\Z_p$, where $f_{L_u}$ is the ternary sextic index form corresponding to the quartic field $L_u$.  

We carry this out in two steps. We first observe that if the cubic resolvent of $Q$ is monogenic over $\Z_p$, then $\Spec \, Q \otimes_\Z \Z_p \simeq \mathrm{Proj}(\Z_p[x,y]/F(x,y))$ for some binary quartic form~$F$ over $\Z_p$. We then use a recent result of Fess (Theorem \ref{dan's identity}) to relate the  solubility of the index form equation to the solubility of the more manageable equation $z^2=F(x,y)$.  Imposing appropriate local conditions within our construction then guarantees that a positive proportion of the quartic fields $L_u$ have no local obstruction to being monogenic.  

After performing this local analysis, a suitable adaptation of the arguments in~\cite{ABS} then shows that a positive proportion of quartic fields are not  binary  despite having no local obstruction to being monogenic. Moreover, we exhibit a positive proportion of such quartic fields with positive (resp.\ negative) discriminant; in particular, the $r = 2$ cases of Theorems  \ref{S4main} and \ref{thm:notbinary} follow, since a quartic field has negative discriminant if and only if it has exactly two real embeddings. 

Using this approach to produce a positive proportion of quartic fields having each of the two remaining real signatures (cases $r = 0$ and $r = 4$ of Theorems \ref{S4main} and \ref{thm:notbinary}) turns out to be surprisingly subtle, as we must produce positive proportions of non-monogenic orders in cubic fields $K$ having totally positive non-square units $u$ as well as units $u$ of mixed signature.  On the one hand, we can guarantee that roughly $50\%$ of the cubic fields in the families constructed in \cite{ABS} are not monogenic. On the other hand, results of Bhargava-Varma \cite{BV2} give lower bounds on the proportion of cubic fields in such families having a unit $u$ of prescribed signature. For units of mixed signature,  which give rise to quartic fields of signature $(0,2)$, the lower bound of $75\%$ in \cite{BV2} is strong enough for us to conclude the existence of a positive proportion of mixed-signature quartic fields that are not monogenic.
However, for totally positive units, which give rise to totally real quartic fields, the lower bound of $50\%$ contained therein is not quite strong enough to guarantee overlap with our subset of non-monogenic cubic orders. To overcome this slight miss, we strengthen the method of \cite{ABS} just enough to force a bit of overlap between these two sets of density roughly $1/2$. Along the way we appeal to,  and prove a slight generalization of, results of Klagsbrun~\cite{Klagsbrun} on the average size of the $3$-torsion subgroups of  $S$-class groups of quadratic fields (for a fixed set $S$ of primes); the generalization allows the imposition of local conditions at finitely many primes not in $S$.

\subsection*{Contents}
This paper is organized as follows.  In \S\ref{sec:cubicprelims} and \S\ref{sec:quarticprelims}, we establish some facts about counting cubic and quartic fields, and we  discuss some of the key   properties of their index forms, including Fess's Theorem. In \S\ref{sec:construction}, we construct a positive-proportion family of quartic fields with no local obstruction to being monogenic. In \S\ref{sec:thm1proof}, we show that when we fix the sign of the discriminant, many of these quartic fields have non-monogenic cubic resolvent rings, thereby proving Theorems  \ref{S4main} and \ref{thm:notbinary} in the case $r = 2$. In \S\ref{sec:allsigs}, we exhibit a positive proportion of quartic fields (with no local obstructions to being monogenic) which are not binary and have signatures $(4,0)$ and $(0,2)$, respectively, thereby completing the proof of Theorems \ref{S4main} and \ref{thm:notbinary} for all real signatures. Finally, in \S\ref{aux}, we establish a mild generalization of Klagsbrun's results on the average size of 3-torsion in $S$-class groups that allows for specified local conditions at a finite set of primes not in $S$, which is utilized as an ingredient in  \S\ref{sec:allsigs}.  

\subsection*{Acknowledgments}
The authors thank Dan Fess and Arul Shankar for many helpful conversations.
The first author was supported by NSF grant~DMS-2002109. The second author was supported by a Simons Investigator Grant and NSF
grant~DMS-1001828. The third author was supported by the Israel Science Foundation (grant No.\ 2301/20).

\addtocontents{toc}{\protect\setcounter{tocdepth}{2}}

\section{Preliminaries on cubic rings}\label{sec:cubicprelims}
We recall the families of discriminants that were constructed in \cite{ABS}. Let $p_1,\ldots,p_t$ be primes satisfying $p_i \equiv2\pmod*3$, and let $n = 3\prod_{i=1}^t p_i$. Define
\[
\Sigma_n = \left\{\minus27dn^2 \in \Z \colon  d \mbox{ fundamental}, \, \left(\frac{d}{7}\right) \neq 1,  \mbox{ and } \left(\frac{d}{p}\right) = 1 \, \mbox{ for all } \, p \mid n \right\},
\]
and write $\Sigma_n^\pm$ for the subset of $D \in \Sigma_n$ such that $\pm D > 0$. 

Then we find that there are many cubic fields per discriminant in $\Sigma_n^\pm$, on average:  
\begin{proposition}\label{prop:averagecount}
Let $\Sigma_n^\pm(X) = \Sigma_n^\pm \cap [-X,X]$, and let $N(\Sigma_n^\pm,X)$ be the number of cubic fields with discriminant in $\Sigma_n^\pm(X)$. Then
\begin{align*}
\lim_{X \to \infty} \dfrac{N(\Sigma_n^\pm, X)}{\#\Sigma_n^\pm(X)} &=(2 \mp1)2^t, 
\end{align*}
\end{proposition}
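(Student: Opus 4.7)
The plan is to follow the method used for the analogous proposition in \cite{ABS}, combining class field theory for $S_3$-extensions with an averaging result of Davenport--Heilbronn type. I would proceed in three steps.

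First, I would set up the class-field-theoretic interpretation. For each $D = -27dn^2 \in \Sigma_n^\pm$, any cubic field $F$ with $\disc(F) = D$ has quadratic resolvent $K_d := \Q(\sqrt{-3d})$, which is real quadratic when $d < 0$ (case $\Sigma_n^+$) and imaginary quadratic when $d > 0$ (case $\Sigma_n^-$). The hypotheses $p_i \equiv 2 \pmod 3$ and $(d/p_i) = 1$ together imply that each $p_i$ is inert in $K_d$, so the unique prime $\p_i$ of $K_d$ above $p_i$ has $\No(\p_i) = p_i^2 \equiv 1 \pmod 3$, and hence $\p_i$ can tamely ramify in cyclic cubic extensions of $K_d$. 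By the $S_3$ conductor-discriminant formula, cubic fields $F$ with $\disc(F) = D$ correspond, modulo the $\Gal(K_d/\Q)$-action on characters, to nontrivial cubic characters $\chi : \Cl_\f(K_d) \to \Z/3\Z$ whose conductor is exactly $\f = \p_3^e \prod_i \p_i$, where the $3$-adic exponent $e$ is forced by $v_3(D) = 3$.

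Second, by M\"obius inversion on divisors of $\f$, I would rewrite the count of such characters as an alternating sum over $\f' \mid \f$ of $\#\Cl_{\f'}(K_d)[3]$. On the $\prod_i \p_i$ part of $\f$, since $(\O_{K_d}/\p_i)^\times \cong \F_{p_i^2}^\times$ generically contributes an independent $\Z/3$-factor to the ray class 3-torsion for each~$i$, the inclusion-exclusion collapses to $(3-1)^t = 2^t$ by the binomial identity applied to the sum $\sum_{S \subseteq \{1,\dots,t\}} (-1)^{t-|S|} 3^{|S|}$. The $\p_3$-part is handled separately by a direct local analysis at $3$ of the $3$-adic cubic orders of discriminant valuation~$3$, contributing a constant factor independent of $d$.

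Third, I would average over $d$ in $\Sigma_n^\pm$ by invoking the Davenport--Heilbronn theorem on the average size of $\#\Cl_K[3]$ with local conditions at finitely many primes (those in $\{3,7\}\cup\{p_i\}$ used in the definition of~$\Sigma_n^\pm$). The average of $\#\Cl_{K_d}[3]$ is $4/3$ for real quadratic $K_d$ (case $\Sigma_n^+$) and $2$ for imaginary quadratic $K_d$ (case $\Sigma_n^-$). Dividing by $2$ to account for the $\Gal(K_d/\Q)$-action, multiplying by the $2^t$ from the previous step, and incorporating the $3$-adic local factor yields the claimed averages $(2 \mp 1)\cdot 2^t$.

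The main obstacle is the bookkeeping at $p = 3$: the conductor exponent~$e$ at~$\p_3$ reflects wild ramification, and one must verify that the $3$-adic local factor combines with the averaged class-group contribution to give exactly $2 \mp 1$ in each signature case. In \cite{ABS} this is carried out via the Delone--Faddeev parametrization of cubic orders by integral binary cubic forms, using a direct count of $3$-adic orbits of discriminant valuation~$3$; the same analysis transfers to the present setting, with the sieve conditions at the $p_i$ imposed via standard local density computations in the Bhargava--Varma framework.
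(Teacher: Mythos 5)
Your proposal takes a genuinely different route from the paper. The paper's own proof is a direct local-mass computation: it cites \cite[Thm.~8]{BST}, which counts cubic fields with prescribed local behavior at finitely many places, writes down the Euler product asymptotics for $N(\Sigma_n^\pm,X)$ and for $\#\Sigma_n^\pm(X)$, and simply divides; what remains is a ratio of local masses at the primes $p\mid 7n$, computed by listing the relevant cubic algebras over $\Q_p$ and their automorphism groups. Your approach --- parametrize cubic fields of discriminant $D$ by cubic ray class characters of the resolvent $K_d=\Q(\sqrt{-3d})$, isolate those of exact conductor by M\"obius inversion, and then average $\#\Cl(K_d)[3]$ over $d$ via Davenport--Heilbronn with local conditions --- is the classical dual picture (indeed \cite{BST} itself is proved by counting binary cubic forms, which is essentially equivalent to counting these characters). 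It can in principle be made to work, but it is substantially more delicate for this particular statement, and as written it has two gaps.

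First, a numerical slip that propagates into the $3$-adic analysis: $v_3(D)=5$, not $3$, since $v_3(-27)=3$, $v_3(n^2)=2$, and $v_3(d)=0$ because $\left(\frac d3\right)=1$. The paper's local computation at $3$ rests on exactly this, namely that there are three cubic extensions of $\Q_3$ of discriminant valuation $5$, each with trivial automorphism group. Second, and more substantively, the assertion that each $(\O_{K_d}/\p_i)^\times$ ``generically contributes an independent $\Z/3$-factor'' to $\Cl_\f(K_d)[3]$, so that the M\"obius sum collapses to $(3-1)^t$, hides the main difficulty. The exact sequence
\[
\O_{K_d}^\times \to \prod_{\p\mid\f}(\O_{K_d}/\p^{e_\p})^\times \to \Cl_\f(K_d) \to \Cl(K_d) \to 1
\]
couples the local $\Z/3$-summands at the $\p_i$ to the global unit group (whose $3$-rank differs between $d<0$ and $d>0$, and which, together with the Davenport--Heilbronn averages $4/3$ and $2$, must produce the final $2\mp1$) and to the ideal classes $[\p_i]\in\Cl(K_d)$. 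Your sketch treats the $p_i$-contribution $2^t$, the class-group average, and the $3$-adic factor as independent multiplicative pieces, but the ray class $3$-torsion is a nontrivial quotient in which these interact, and they cannot simply be averaged separately. Controlling precisely this interaction between the $[\p_i]$ and $\Cl(K_d)[3]$ is what Proposition~\ref{prop:modHCL} and \S\ref{sec:allsigs} have to do later in the paper; it is the part of the argument that the \cite{BST}-route sidesteps entirely, which is why the paper's proof is so short.
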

\begin{proof}
This was stated without proof in \cite{ABS}, since the result was not used there.  We deduce the result from \cite[Thm.~8]{BST}.  
 To compute the local factors at $p \mid n$, we note that for $p \neq 3$, there is a unique totally ramified cubic extension of $\Q_p$, and it is not Galois --- and thus it has trivial automorphism group --- since $p \equiv 2\pmod*3$.  When $p = 3$, we note that there are three cubic extensions of $\Q_3$ of discriminant valuation 5, and they too have trivial automorphism groups. Therefore, we have
 \begin{align*}
 N(\Sigma_n^+, X) &\sim \frac{X}{12} \left(1 - 3^{-1}\right)\left(3\cdot 3^{-5}\right)\left(27\cdot 7^{-2}\right)\prod_{p\,  \nmid\, 7n}\left(1-p^{-1}\right)\left(1 + p^{-1}\right)\prod_{p\, \mid\, \frac n3} \left(1-p^{-1}\right)\left(p^{-2}\right),
 \\
 N(\Sigma_n^-, X) &\sim \frac{X}{4} \left(1 - 3^{-1}\right)\left(3\cdot 3^{-5}\right)\left(27\cdot 7^{-2}\right)\prod_{p \,\nmid\, 7n}\left(1-p^{-1}\right)\left(1 + p^{-1}\right)\prod_{p\,\mid\, \frac n3} \left(1-p^{-1}\right)\left(p^{-2}\right),
 \end{align*}
 while
 \begin{equation*} 
 \#\Sigma_n^\pm(X) \sim \frac{X}{2} \left(1 - 3^{-1}\right)\left(\textstyle\frac12\cdot 3^{-5}\right)\left(27\cdot 7^{-2}\right)\prod_{p \,\nmid\, 7n}\left(1-p^{-1}\right)\left(1 + p^{-1}\right)\prod_{p\,\mid\, \frac n3} \left(1-p^{-1}\right)\left(\textstyle\frac12\cdot p^{-2}\right).
 \end{equation*}
 the $\frac12$'s in the final Euler factor are due to the Kronecker symbol condition at each $p\mid n$.
\end{proof}

In fact, for many {\it individual} discriminants $D \in \Sigma_n$, we can prove that this average gives a lower bound on the number of cubic fields of discriminant $D$. Indeed, let $U_n^+$ (resp.\ $U_n^-$) be the subset of $D \in \Sigma_n^+$ (resp.\ $\Sigma_n^-$) such that the $3$-torsion in the class group of $\Q(\sqrt{-3D})$ (resp.\ $\Q(\sqrt{D})$) is trivial.  The following is stated as  \cite[Prop.\ 20--21]{ABS}.

\begin{proposition}\label{prop:r3=0}
If $D \in U_n^+$ $($resp.\ $U_n^-)$, then there are exactly $2^t$ $($resp.\ $3\cdot 2^t)$ cubic fields of discriminant $D$. 
\end{proposition}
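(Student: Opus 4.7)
The plan is to combine the Hasse class-field-theoretic parametrization of cubic fields of fixed discriminant with the Scholz reflection principle.

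For $D = -27dn^2 \in \Sigma_n$, every non-Galois cubic field $K$ with $\disc(K) = D$ has quadratic resolvent $F := \Q(\sqrt D) = \Q(\sqrt{-3d})$, which is real quadratic if $D > 0$ and imaginary quadratic if $D < 0$. (Cyclic cubic fields of discriminant in $\Sigma_n$ are ruled out, since $D$ a perfect square would force $d = -3$, which is incompatible with $(d/7) \neq 1$ as one computes $(-3/7)=1$.) By Hasse's theorem, such $K$ are in bijection with $\Gal(F/\Q)$-antisymmetric index-three subgroups of a ray class group $\Cl_{\mathfrak{c}}(F)$, for a specific modulus $\mathfrak{c}$ supported on the primes dividing $3n$ (and possibly the infinite place), with exponents dictated by the shape of $D$.

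I would then decompose $\Cl_{\mathfrak{c}}(F)^-/3$ into local and global contributions via the standard exact sequence relating units, local units at $\mathfrak{c}$, $\Cl_{\mathfrak{c}}(F)$, and $\Cl(F)$. For each $p_i \mid n$, the conditions $p_i \equiv 2 \pmod*3$ and $(d/p_i) = 1$ force $p_i$ to split in $F$ as $\mathfrak{p}_1 \mathfrak{p}_2$, and the condition $p_i^2 \| D$ forces $\tilde K / F$ to ramify at exactly one of $\mathfrak{p}_1, \mathfrak{p}_2$; this contributes an independent factor of two per $p_i$, giving $2^t$ overall. The local piece at $3$, combined with the image of $\O_F^\times$, differs by a factor of three between the real and imaginary cases: in the real case, the fundamental unit maps to a nontrivial $\sigma$-antisymmetric class killing one $\F_3$ factor in the local piece at $3$, whereas in the imaginary case $\O_F^\times = \{\pm 1\}$ is $\sigma$-symmetric and kills nothing. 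This reproduces exactly the factor-of-three ratio between the local densities for $\Sigma_n^+$ and $\Sigma_n^-$ appearing in Proposition~\ref{prop:averagecount}.

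The remaining ``global'' contribution to the count is $\Cl(F)[3]^-$ (which coincides with $\Cl(F)[3]$, since the $\sigma$-symmetric part of the $3$-torsion of a quadratic field is induced from $\Z$ and hence trivial). For $D > 0$ (so $F$ real), Scholz's reflection theorem gives $r_3(\Cl(F)) \leq r_3(\Cl(\Q(\sqrt{-3D})))$, so the hypothesis $D \in U_n^+$ (triviality of $3$-torsion in the imaginary mirror field) forces $\Cl(F)[3] = 0$. For $D < 0$ (so $F$ imaginary), the hypothesis $D \in U_n^-$ directly gives $\Cl(F)[3] = 0$. In either case the count collapses to the local product, yielding exactly $2^t$ cubic fields for $D > 0$ and $3 \cdot 2^t$ for $D < 0$.

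The main obstacle is the delicate local bookkeeping: pinning down the exact conductor $\mathfrak{c}$ and the sign/orientation conventions for the antisymmetric piece, verifying that the local contribution at each $p_i$ collapses to exactly a factor of two (neither $1$ nor $3$), and confirming that the real-versus-imaginary distinction at the prime $3$ produces precisely the factor-of-three ratio between the local densities appearing in Proposition~\ref{prop:averagecount}.
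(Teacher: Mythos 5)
Your strategy of combining a class-field-theoretic parametrization of cubic fields with a triviality condition on an imaginary quadratic class group is in the right spirit, but there is a concrete error in the local bookkeeping at the primes $p_i$ that invalidates the claimed source of the $2^t$. You assert that $p_i\equiv 2\pmod 3$ and $\left(\frac{d}{p_i}\right)=1$ force $p_i$ to split in $F:=\Q(\sqrt{D})=\Q(\sqrt{-3d})$. This is false: for $p_i\equiv 2\pmod 3$ one has $\left(\frac{-3}{p_i}\right)=-1$, hence $\left(\frac{-3d}{p_i}\right)=-1$ and $p_i$ is \emph{inert} in the quadratic resolvent $\Q(\sqrt{-3d})$. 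The $p_i$ split in the \emph{mirror} field $\Q(\sqrt{-3D})=\Q(\sqrt{d})$, not in $\Q(\sqrt{D})$. Consequently the mechanism you propose --- choosing which of two primes of $F$ above $p_i$ should ramify in $\tilde K/F$ --- simply does not exist: there is a unique prime $\mathfrak{p}_i$ of $F$ above $p_i$, and the ramified cyclic cubic extension of $F_{\mathfrak{p}_i}\simeq\Q_{p_i^2}$ is pinned down uniquely by the requirement that its cubic subfield over $\Q_{p_i}$ be the unique totally ramified cubic. Locally at each $p_i$ there is therefore no choice, and $2^t$ must be extracted as a global phenomenon rather than as a product of independent local factors of two.

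The paper's proof (inherited from [ABS], as the proof of Proposition~\ref{prop:modHCL} recalls) sidesteps this by parametrizing cubic fields directly via the imaginary mirror field $F=\Q(\sqrt{-3D})=\Q(\sqrt{d})$: cubic fields of discriminant $-27df^2$ are in bijection with $\Gal(F/\Q)$-conjugacy classes of pairs $(J,[I])$, where $J\subset\O_F$ is an ideal of norm $f$ and $[I]\in\Cl(\O_F)$ is a cube root of $[J]$. In that field the primes $3$ and $p_i$ all split, so there are $2^{t+1}$ ideals of norm $n$; when $\Cl(F)[3]=0$ each of their classes has a unique cube root, giving $2^t$ cubic fields after dividing by conjugation, with an analogous count producing $3\cdot 2^t$ in the other signature. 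Note also that this puts the $3$-torsion hypothesis on exactly the field where the counting happens, so no Scholz reflection is invoked. Your approach could in principle be carried through in the resolvent --- each inert $p_i$ contributes $\mu_3\subset\F_{p_i^2}^\times$ to the $\sigma$-antisymmetric ray class group, and one would then need a reflection or mass-formula argument to turn that into $2^t$ --- but that is a different and substantially more delicate computation than the one you sketch, and the ``two primes above $p_i$'' step as written cannot be the source of the factor $2^t$.
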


\noindent 
We will also require a result of this type when $3$-part of the class group has rank 1.
\begin{proposition}\label{prop:modHCL}
Let $D \in \Sigma_n^+$, let $F =\Q(\sqrt{\minus3D})$, and factor $(p_i) = \p_i \bar\p_i$ in $\O_F$ for each $1\leq i\leq t$. If $\#\Cl(F)[3] = 3$ and at least one $[\p_i]$ is not a cube in $\Cl(F)$, then there are at least $3\cdot  2^{t-1}$ cubic fields $K$ having discriminant either $D$ or $D/p_i^2$.
\end{proposition}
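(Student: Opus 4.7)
The plan is to adapt the class field theoretic count from the proof of Proposition~\ref{prop:r3=0} (\cite[Prop.\ 20--21]{ABS}) to handle a nontrivial $\Cl(F)[3]$. Recall that this count parametrizes cubic fields $K$ with a prescribed discriminant via $3$-torsion in appropriate ray class groups of the quadratic resolvent $\Q(\sqrt D)$, and uses Scholz reflection to relate this to $3$-torsion of class groups of the mirror field $F = \Q(\sqrt{-3D})$.

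First, I would carry out the conductor analysis. Since each $p_i \equiv 2\pmod*{3}$ splits as $\p_i\bar\p_i$ in $F$, the local unit group $(\O_F/(p_i))^\times \cong \F_{p_i}^\times \times \F_{p_i}^\times$ has trivial $3$-part (as $3\nmid p_i-1$). The exact sequence relating ordinary and ray class groups of $F$ then yields a canonical isomorphism $\Cl_{\prod_{j \in S} p_j}(F)[3] \cong \Cl(F)[3]$ for every $S \subseteq \{1,\dots,t\}$, so the $3$-torsion controlling the Hasse count is the same at every conductor level supported on the $p_i$'s.

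Next, under $\#\Cl(F)[3] = 3$, I would argue that the count of cubic fields of discriminant $D/\prod_{j\in T} p_j^2$ summed over subsets $T\subseteq\{1,\dots,t\}$ picks up a factor of (approximately) $3$ compared to the trivial-$3$-torsion case, giving roughly $3\cdot 2^t$ cubic fields overall. The hypothesis that $[\p_i]$ is not a cube in $\Cl(F)$---equivalently, by Artin reciprocity, that $\p_i$ is inert in the unramified cubic class field $H/F$ cut out by $\Cl(F)[3]$---controls the conductor at $p_i$ in the new cubic extensions: it ensures that the ramification at $p_i$ does not collapse. Combining the three classes in $\Cl(F)[3]$ (after accounting for the action of complex conjugation identifying $c$ and $c^{-1}$) with the $2^{t-1}$ subset choices at $\{p_j : j \ne i\}$, I would then verify that these cubic extensions of $F$ descend to at least $3 \cdot 2^{t-1}$ distinct cubic fields of $\Q$ with discriminant exactly $D$ or $D/p_i^2$.

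The main obstacle will be the careful bookkeeping of conductors at the other primes $p_j$ ($j \ne i$): while ``$[\p_i]$ not a cube'' precisely prevents collapse of the conductor at $p_i$, one must still verify that no further collapse of the conductor occurs at any $p_j$, so that the constructed cubic fields are pairwise distinct and have discriminant exactly $D$ or $D/p_i^2$. This requires an inductive subset argument analogous to that used in the proof of Proposition~21 of \cite{ABS}, tracking the interaction between the class of $[\p_i]$ in $\Cl(F)/3\Cl(F)$ and the ramification data at the remaining primes via Artin reciprocity at each stage.
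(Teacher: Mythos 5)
Your proposal identifies the right starting point (the pair parametrization from \cite[Prop.~20]{ABS}) and the right conceptual framework (class field theory over the mirror field $F = \Q(\sqrt{\minus3D})$, reflection), but it misses the one concrete idea that makes the paper's proof work, and as a result it leaves its ``main obstacle'' genuinely unresolved rather than merely technical.

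The paper's proof does not route through ray class groups or conductor bookkeeping at all. It invokes \cite[Prop.~20]{ABS} directly: the number of cubic fields of discriminant $\minus27df^2$ equals half the number of pairs $(J,[I])$ with $J \subseteq \O_F$ an ideal of norm $f$ and $[I]$ a cube root of $[J]$ in $\Cl(F)$. The key observation, which your proposal does not surface, is that ``$[\p_i]$ not a cube'' together with $\#\Cl(F)[3]=3$ means $[\p_i]$ \emph{generates} the one-dimensional $\F_3$-vector space $\Cl(F)/3\Cl(F)$. This lets one correct any ideal: for each of the $2^t$ ideals $J_1$ of norm $n/p_i$, there is a \emph{unique} $J_2 \in \{(1), \p_i, \bar\p_i\}$ with $[J_1]\equiv[J_2]$ mod cubes, so that $J := \bar J_1 J_2$ is a cube in $\Cl(F)$ of norm $n$ or $n/p_i$. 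Each such $J$ has exactly $3$ cube roots, yielding $3\cdot 2^t$ pairs, or $3\cdot 2^{t-1}$ cubic fields after the $\Gal(F/\Q)$-quotient. Your proposal instead speaks of ``$[\p_i]$ not a cube $\Leftrightarrow$ $\p_i$ inert in the cubic class field'' as a conductor-control device, which is a weaker and less usable reformulation; it does not by itself produce the required bijective matching between $J_1$'s and corrections $J_2$.

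There is also a structural gap in your step~2: you count cubic fields of discriminant $D/\prod_{j\in T}p_j^2$ summed over \emph{all} subsets $T\subseteq\{1,\dots,t\}$ and then hope to restrict to $T\in\{\varnothing,\{i\}\}$. But the proposition demands $3\cdot2^{t-1}$ fields concentrated at $T\in\{\varnothing,\{i\}\}$ specifically, and there is no a priori reason the total count distributes favorably across subsets; with $[\p_j]$ unrestricted for $j\neq i$, it could a priori land elsewhere. The paper avoids this entirely by constructing the ideals $J$ to have norm exactly $n$ or $n/p_i$, so no post-hoc restriction is needed. Until you replace the sum-over-$T$ heuristic with the explicit $J = \bar J_1 J_2$ construction (or an equivalent), the argument does not close.
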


\begin{proof}
Write $D = \minus27dn^2$ and fix some $f \mid n$. 
By the proof of \cite[Prop.\ 20]{ABS}, the number of cubic fields of discriminant $\minus27df^2$ is half the number of pairs $(J,[I])$, where $J\subseteq \O_F$ is an ideal of norm $f$ and $[I]\in \Cl(\O_F)$ a cube root of $[J]$. Note that if $[J]$ is a cube then, by hypothesis, it has exactly $3$ cube roots.

By assumption, there is an $i$ such that  $[\p_i]$ spans the one-dimensional $\F_3$-vector space $\Cl(F)/3\Cl(F)$.  For each of the $2^{t}$ ideals $J_1$ of norm $n/p_i$, there is a unique ideal $J_2$ of norm dividing $1$ or $p_i$ such that $[J_1] = [J_2]$ modulo cubes. The ideal $J := \bar J_1 J_2$ is therefore a cube in the class group, and has norm either $n$ or $n/p_i$.  Up to $\Gal(\O_F/\Z)$-conjugacy, this gives $ \frac123\cdot 2^{t} =  3\cdot 2^{t-1}$ pairs $(J, [I])$, and hence there are at least that many cubic fields $K$ of discriminant $D$ or $D/p_i^2$.
\end{proof}

\section{Preliminaries on quartic rings and their index forms}\label{sec:quarticprelims}
To study local obstructions to monogenicity for quartic fields, we use a parametrization of quartic rings and their cubic resolvent rings from \cite{HCLIII} and \cite{evan}, which in particular will allow us to relate the index form of a quartic ring to that of its cubic resolvent ring. 

For a principal ideal domain $S$, let $V(S)$ denote the space of pairs $(A,B)$ of ternary quadratic forms with coefficients in $S$. A {\it cubic} (resp.\ {\it quartic}) ring over $S$ is an $S$-algebra that is free of rank 3 (resp.\ 4) as an $S$-module.

\begin{theorem}[\cite{HCLIII},\cite{evan}]\label{thqrpar}
Let $S$ be a principal ideal domain.
There is a bijection between pairs $(A,B)\in V(S)$ and isomorphism classes of pairs $((Q,\langle \alpha,\beta,\gamma\rangle),(C,\langle\omega,\theta\rangle))$,
where~$Q$ is a quartic ring over $S$ with a chosen basis $\langle\alpha,\beta,\gamma\rangle$ of $Q/S$ and $C$ is a cubic resolvent
ring of~$Q$ with a chosen basis $\langle\omega,\theta\rangle$ of~$C/S$. 
\end{theorem}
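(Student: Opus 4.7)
The plan is to follow Bhargava's construction in \cite{HCLIII}, extended to a general PID in \cite{evan}, by exhibiting mutually inverse maps explicitly.

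Given $(A,B)\in V(S)$, I would first produce the cubic resolvent: form the binary cubic $f(x,y) := 4\det(Ax-By)\in S[x,y]$ and let $(C,\langle\omega,\theta\rangle)$ be the based cubic ring attached to $f$ via the Delone--Faddeev parametrization (the binary form/ring correspondence of \cite{BM,Nakagawa} for $n=3$, valid over any PID).  Next, I would build $Q$ as the free $S$-module on $\langle 1,\alpha,\beta,\gamma\rangle$ and declare the products $\alpha^2,\beta^2,\gamma^2,\alpha\beta,\alpha\gamma,\beta\gamma$ to be the $S$-linear combinations of $1,\alpha,\beta,\gamma$ whose structure constants are the universal polynomials in the entries $A_{ij},B_{ij}$ tabulated in \cite{HCLIII}.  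Finally, I would write down an explicit map $\phi\colon Q\to C$ using the formulas of \cite{HCLIII} and identify $C$ with the cubic resolvent of $Q$.

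The main obstacle is verifying that this data truly defines a commutative associative ring with a genuine cubic resolvent.  This reduces to a bounded list of polynomial identities in the twelve indeterminates $A_{ij},B_{ij}$, so it suffices to check them once and for all in $\Z[A_{ij},B_{ij}]$ and then specialize to $S$.  To discharge the verification, the plan is Bhargava's generic argument: pass to $\Q(A_{ij},B_{ij})$, where $\disc(f)$ is nonzero, so $C$ becomes étale of rank $3$ and (by construction) $Q$ becomes étale of rank $4$, splitting over an algebraic closure as a product of four copies of the ground field with an explicit orthogonal idempotent basis.  On this splitting, associativity is immediate and the resolvent map corresponds to a standard symmetric-function construction on the four roots, so the required identities admit a closed-form verification.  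Because they are polynomial in $A_{ij},B_{ij}$, they descend to $\Z[A_{ij},B_{ij}]$ and hence hold over $S$.

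For the inverse direction, given a based quartic ring $(Q,\langle\alpha,\beta,\gamma\rangle)$ together with a based cubic resolvent $(C,\langle\omega,\theta\rangle)$, the entries of $A$ and $B$ can be read off uniquely from the multiplication table: the rank-$3$ freeness of $Q/S$ means each of the six symmetric products $\alpha\beta,\ldots,\gamma^2$ has a unique expansion modulo $S$, and the chosen basis of $C/S$ pins down the remaining $\GL_2(S)$-twist in $(A,B)$.  That this construction is a two-sided inverse to the forward map is again a system of polynomial identities in the structure constants, and the same generic descent argument closes out the proof.
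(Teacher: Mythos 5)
The paper does not prove Theorem~\ref{thqrpar}; it is stated as a citation to \cite{HCLIII} and \cite{evan}, with the surrounding text explicitly deferring the construction to those references (``See \cite{HCLIII} and \cite{evan} for details''). So there is no internal argument to compare against, and the question is whether your outline matches the cited proof and would go through.

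Your sketch does capture the shape of Bhargava's argument: build the structure constants of $Q$ as explicit (quadratic) polynomials in the entries of $A,B$; check the ring axioms and resolvent identities once and for all in $\Z[A_{ij},B_{ij}]$ by passing to $\Q(A_{ij},B_{ij})$, where $\Disc$ is a nonzero element and both rings split over the algebraic closure so associativity and the resolvent identity become symmetric-function checks on the four intersection points of the conics $q_A=q_B=0$; then specialize. The one place you are slightly misdirected is the inverse map. You say $(A,B)$ is ``read off uniquely from the multiplication table'' of $Q$ with the $C$-basis killing a residual twist, but the structure constants of $Q$ are $2\times 2$ minors of $(A,B)$, so the multiplication table alone does not linearly determine the entries and in fact imposes Pl\"ucker-type constraints. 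In \cite{HCLIII} the pair $(A,B)$ is instead read off directly from the resolvent data: the quadratic map $\phi\colon Q/S\to C/S$, expressed in the chosen bases of $Q/S$ and $C/S$, \emph{is} the pair $(A,B)$ (up to sign conventions), and the nontrivial content of bijectivity is that this $(A,B)$ reproduces the original multiplication on $Q$, which follows from the defining identity $\xi(1\wedge x\wedge y\wedge xy)=1\wedge\phi(x)\wedge\phi(y)$ again by a generic computation. Your acknowledgment of the $C$-basis role shows you are aware of the gauge, but routing the inverse through $\phi$ rather than the multiplication table is what makes the construction well-posed without having to solve the quadratic system.
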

\noindent 
A {\it cubic resolvent ring} $C$ of a quartic ring $Q$ over $S$ is a cubic ring over $S$ that is  equipped with an  isomorphism $\xi:\wedge^4 Q\to \wedge^3C$ and a quadratic mapping $\phi:Q/S\to C/S$ satisfying $\xi(1\wedge x\wedge y\wedge xy)=1\wedge\phi(x)\wedge\phi(y)$ for any $x,y\in Q/S$. These conditions also imply that  $\Disc(Q)=\Disc(C)$, when computed with respect to bases $\langle\alpha,\beta,\gamma\rangle$ and $\langle \omega,\theta\rangle$ of $Q$ and~$C$ such that $\xi(1\wedge\alpha\wedge\beta\wedge\gamma)=1\wedge\omega\wedge\theta$. By construction, the bases in Theorem~\ref{thqrpar} are compatible in this way.  See~\cite{HCLIII} and \cite{evan} for details.

The pair $(C,\langle\omega,\theta\rangle)$ is called a {\it based} cubic ring and 
$(Q,\langle\alpha,\beta,\gamma\rangle)$ a {\it based} quartic ring. 
A complete description of the construction of
$((Q,\langle\alpha,\beta,\gamma\rangle),(C,\langle\omega,\theta\rangle))$
from $(A,B)$ can be found
in~\cite[\S3.2--3.3]{HCLIII}. We denote the  (based) quartic ring and (based) cubic resolvent ring associated a pair $(A,B)\in V(S)$ by $Q(A,B)$ and $R(A,B)$, respectively. 
\noindent

We view elements of $(A,B)\in V(S)$ as pairs  of $3$-by-$3$ symmetric  matrices; we denote the associated quadratic forms by $q_A(x_1,x_2,x_3):=\sum_{1\leq i<j\leq 3}a_{ij}x_ix_j$ and $q_B(x_1,x_2,x_3):=\sum_{1\leq i<j\leq 3}b_{ij}x_ix_j$, respectively, where $a_{ij},b_{ij}\in S$. The index forms of $Q(A,B)$ and $R(A,B)$, in terms of $(A,B)$, are then given in the following proposition.

\begin{proposition}\label{prop: quarticcubic}
Let $(A,B)\in V(S)$. The index form $f_{R(A,B)}$ of $R(A,B)$ is given by \begin{equation}\label{cubicindex}
f_{R(A,B)}(x,y)= 4\cdot \det(A x - By),\end{equation} and the index form $f_{Q(A,B)}$ of $Q(A,B)$ is given by \begin{equation}\label{quarticindex}
f_{Q(A,B)}(x,y,z)= f_{R(A,B)}(q_B(x,y,z), \minus q_A(x,y,z)).
\end{equation}
In particular, if a quartic ring over $S$ is monogenic then its cubic resolvent ring is monogenic.
\end{proposition}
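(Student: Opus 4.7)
The plan is to extract both index forms from the parametrization data furnished by Theorem~\ref{thqrpar}. For equation~(\ref{cubicindex}), I would appeal to the Delone--Faddeev correspondence: by the construction in \cite[\S3]{HCLIII}, the $\SL_3$-invariant binary cubic form attached to $(A,B)$ is $4\det(Ax-By)$ (the factor of $4$ is forced by the identity $\Disc(R(A,B))=\Disc(A,B)=\Disc(4\det(Ax-By))$), and this is precisely the Delone--Faddeev binary cubic of the based cubic ring $(R(A,B),\langle\omega,\theta\rangle)$. A short computation with the Delone--Faddeev multiplication table then shows that for any based cubic ring its Delone--Faddeev binary cubic coincides (up to conventional sign) with its index form, yielding~(\ref{cubicindex}).

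For equation~(\ref{quarticindex}), the main idea is to apply the defining compatibility $\xi(1\wedge x\wedge y\wedge xy)=1\wedge\phi(x)\wedge\phi(y)$ of the cubic resolvent to the pair $(u,u^2)$, where $u := x\alpha+y\beta+z\gamma \in Q/S$. This yields
\[
\xi(1\wedge u\wedge u^2\wedge u^3) \;=\; 1\wedge \phi(u)\wedge \phi(u^2).
\]
The left-hand side equals $f_{Q(A,B)}(x,y,z)\cdot(1\wedge\omega\wedge\theta)$ by the compatibility $\xi(1\wedge\alpha\wedge\beta\wedge\gamma)=1\wedge\omega\wedge\theta$ together with the definition of the quartic index form. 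From the explicit formulas in \cite[\S3.2--3.3]{HCLIII}, I would next identify the quadratic resolvent as $\phi(u) = q_B(x,y,z)\,\omega - q_A(x,y,z)\,\theta$, from which (\ref{cubicindex}) gives $1\wedge\phi(u)\wedge\phi(u)^2 = f_{R(A,B)}(q_B(x,y,z),-q_A(x,y,z))\cdot(1\wedge\omega\wedge\theta)$. The final step is the identity $1\wedge\phi(u)\wedge\phi(u^2)=1\wedge\phi(u)\wedge\phi(u)^2$, i.e., that $\phi(u^2)-\phi(u)^2$ lies in the $S$-span of $1$ and $\phi(u)$; combining these three steps yields~(\ref{quarticindex}).

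For the concluding assertion, if $Q(A,B)$ is monogenic then $f_{Q(A,B)}(x_0,y_0,z_0)=\pm1$ for some $(x_0,y_0,z_0)\in S^3$, and substituting into~(\ref{quarticindex}) shows that $f_{R(A,B)}$ represents $\pm 1$ at $(q_B(x_0,y_0,z_0),-q_A(x_0,y_0,z_0))$, so $R(A,B)$ is monogenic. The main obstacle in the above plan is verifying the congruence $\phi(u^2)\equiv\phi(u)^2 \pmod{S+S\cdot\phi(u)}$, a cubic-order identity for the quadratic map $\phi$ relative to multiplication in $Q$. It is not explicitly among the axioms stated in the excerpt, but can be extracted either from the full list of resolvent compatibilities in \cite{HCLIII} and \cite{evan}, or by direct expansion using the coordinate formulas for the multiplication tables of $Q$ and $C$ in \cite[\S3.2--3.3]{HCLIII}; the remaining steps are either definitional or routine bookkeeping.
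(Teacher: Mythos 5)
Your approach is genuinely different from the paper's. The paper simply cites \cite[Lemma~9]{HCLIII} for both identities (\ref{cubicindex}) and (\ref{quarticindex}), plus \cite[Corollary~4]{HCLIII} for the uniqueness of the cubic resolvent of a primitive (in particular, monogenic) quartic ring, and then reads off the ``in particular'' from the identity. You instead re-derive the identities from the defining axioms of the resolvent, which is more self-contained and conceptually illuminating: the key idea of substituting $(x,y)=(u,u^2)$ into $\xi(1\wedge x\wedge y\wedge xy)=1\wedge\phi(x)\wedge\phi(y)$ cleanly reduces (\ref{quarticindex}) to (\ref{cubicindex}) plus one structural fact about $\phi$.

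That structural fact --- the congruence $\phi(u^2)\equiv\phi(u)^2\pmod{S+S\phi(u)}$ --- is the one real gap you (correctly) flag, and it should be closed rather than deferred. It does hold, and more is true: if $Q$ is an $S_4$-quartic order with $\phi$ realized as the Ferrari resolvent (so that conjugates of $\phi(u)$ are $u_1u_2+u_3u_4$, etc.), then $\phi(u^2)-\phi(u)^2=-2\,\mathrm{Nm}(u)\in S$, so the congruence holds even modulo $S$ alone. Since both sides are polynomial in the coordinates of $(A,B)$ and the $S_4$-field locus is Zariski dense, this polynomial identity persists for arbitrary $(A,B)\in V(S)$. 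Alternatively, it can be verified directly from the explicit formulas for $\phi$ in \cite[\S 3.2--3.3]{HCLIII}. You should also double-check the sign/normalization in your formula $\phi(u)=q_B(x,y,z)\,\omega-q_A(x,y,z)\,\theta$ against the conventions in \cite{HCLIII}, since a sign error there would flip the argument of $f_{R(A,B)}$ in (\ref{quarticindex}); the rest of the outline --- the Delone--Faddeev identification for (\ref{cubicindex}) and the concluding substitution for the monogenicity statement --- is sound.
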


\begin{proof}
See \cite[Lemma $9$]{HCLIII} and the surrounding discussion for the assertions on the index forms.  A monogenic quartic ring $Q$ over $S$ is primitive and  thus, by~\cite[Corollary $4$]{HCLIII}, it has a unique cubic resolvent ring $R$.  Furthermore, if $f_Q$ represents 1, then by (\ref{quarticindex}), $f_R$ also represents~1! 
\qedhere
\end{proof}

There is a natural (based) ring of rank $n$ associated to any binary $n$-ic form over $S$~\cite{BM,Nakagawa,Wood-binary}. The quartic ring $Q_F$ over $S$ associated to a binary quartic form $F(x,y) =  a x^4 + b x^3 y + c x^2 y^2 + d x y^3 + e y^4$ over $S$ is given by  $Q(A_1, B_F)$, where
\begin{equation*}
A_1 = \left[\begin{array}{ccc} \phantom0 & \phantom0 & 1/2\\ \phantom0 & -1 & \phantom0\\ 1/2 & \phantom0 & \phantom0\end{array}\right],
\qquad B_F = \left[\begin{array}{ccc} a & b/2 & 0\\ b/2 & c & d/2\\0 & d/2 & e\end{array}\right];
\end{equation*}
see~Wood~\cite{Wood-monogenic,BS1}. 

\begin{theorem}[Fess~\cite{Fess}]\label{dan's identity}
Let $F$ be a binary quartic form over a principal ideal domain $S$, and  $Q_F$ the associated based quartic ring over $S$. Then the index form $f_{Q_F}$ of $Q_F$ satisfies
\begin{equation}
    f_{Q_F}(x^2, xy, y^2) = F(x,y)^3.
\end{equation}
\end{theorem}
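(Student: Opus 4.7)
The plan is to compute $f_{Q_F}(x^2,xy,y^2)$ directly via Proposition \ref{prop: quarticcubic}, exploiting the special shape of the matrix $A_1$. Concretely, Proposition \ref{prop: quarticcubic} tells us that
\begin{equation*}
f_{Q_F}(X,Y,Z) \;=\; f_{R(A_1,B_F)}\bigl(q_{B_F}(X,Y,Z),\,\minus q_{A_1}(X,Y,Z)\bigr),
\end{equation*}
so the whole computation reduces to understanding how $q_{A_1}$, $q_{B_F}$, and $\det(A_1 T - B_F U)$ behave under the substitution $(X,Y,Z)=(x^2,xy,y^2)$.

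The first step is the key geometric observation. The matrix $A_1$ is chosen precisely so that $q_{A_1}(X,Y,Z) = XZ - Y^2$; under the Veronese map $(x,y)\mapsto (x^2,xy,y^2)$ this form vanishes identically, so
\begin{equation*}
q_{A_1}(x^2,xy,y^2) \;=\; 0.
\end{equation*}
This is the only place where the specific shape of $A_1$ matters, and it is what collapses the computation. Next I would observe that the entries of $B_F$ have been chosen so that $q_{B_F}(X,Y,Z) = aX^2+bXY+cY^2+dYZ+eZ^2$, whence a one-line calculation gives
\begin{equation*}
q_{B_F}(x^2,xy,y^2) \;=\; ax^4 + bx^3y + cx^2y^2 + dxy^3 + ey^4 \;=\; F(x,y).
\end{equation*}

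Feeding these two identities into the quartic index formula gives
\begin{equation*}
f_{Q_F}(x^2,xy,y^2) \;=\; f_{R(A_1,B_F)}\bigl(F(x,y),\,0\bigr).
\end{equation*}
By \eqref{cubicindex}, the cubic resolvent's index form is $f_{R(A_1,B_F)}(T,U) = 4\det(A_1 T - B_F U)$, so setting $U=0$ gives $f_{R(A_1,B_F)}(T,0) = 4\det(A_1)\,T^3$. A direct expansion of the $3\times 3$ determinant of $A_1$ yields $\det(A_1)=\tfrac{1}{4}$, so $4\det(A_1)=1$ and
\begin{equation*}
f_{Q_F}(x^2,xy,y^2) \;=\; F(x,y)^3,
\end{equation*}
as desired.

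There is no real obstacle here; the proof is a pure computation, and the only conceptual input is the recognition that the Veronese parametrization $(x,y)\mapsto (x^2,xy,y^2)$ of the conic $\{q_{A_1}=0\}\subset \mathbb{P}^2$ is exactly the geometric content of the identity---this is also why $q_{B_F}$ pulls back to the original form $F$, reflecting the fact that the subscheme of $\mathbb{P}^1_S$ cut out by $F$ is the same as the intersection of the conic $q_{A_1}=0$ with the quadric $q_{B_F}=0$. All of this is valid over an arbitrary principal ideal domain $S$ since the formulas from Proposition \ref{prop: quarticcubic} and the identity $\det(A_1)=1/4$ hold universally.
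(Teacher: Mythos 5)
Your proof is correct and follows essentially the same route as the paper: both apply Proposition \ref{prop: quarticcubic} together with the observations $q_{A_1}(x^2,xy,y^2)=0$, $q_{B_F}(x^2,xy,y^2)=F(x,y)$, and $4\det(A_1)=1$. The paper's version is just more terse; your added commentary about the Veronese parametrization of the conic $q_{A_1}=0$ is a pleasant gloss but not a different argument.
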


\begin{proof}
We have $q_{A_1}(x^2, xy, y^2) = 0$ and $q_{B_F}(x^2, xy, y^2) = F(x,y)$. Thus, by (\ref{cubicindex}) and (\ref{quarticindex}), we see that $f_{Q_F}(x^2, xy, y^2) = 4\cdot \det(A_1\cdot F(x,y)) = F(x,y)^3$.
\end{proof}

\begin{corollary} 
If $F$ represents a square in $S^\times$,
then $f_{Q_F}(x,y,z)$ represents $1$ over $S$.
\end{corollary}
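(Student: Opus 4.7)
The plan is to derive the corollary as a direct specialization of Fess's Theorem together with the homogeneity of the index form. Recall that $f_{Q_F}$ is a homogeneous polynomial in three variables of degree $\binom{4}{2}=6$, so for any $\lambda\in S$ and any $(x,y,z)\in S^3$ one has $f_{Q_F}(\lambda x,\lambda y,\lambda z)=\lambda^6\, f_{Q_F}(x,y,z)$.

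Suppose $F$ represents a square in $S^\times$, i.e., there exist $x_0,y_0\in S$ and $u\in S^\times$ with $F(x_0,y_0)=u^2$. First I would apply Theorem~\ref{dan's identity} at $(x,y)=(x_0,y_0)$ to obtain
\begin{equation*}
f_{Q_F}(x_0^2,\,x_0y_0,\,y_0^2)\;=\;F(x_0,y_0)^3\;=\;u^6.
\end{equation*}
Since $u\in S^\times$, the element $u^{-1}$ lies in $S$, and I would then invoke homogeneity with $\lambda=u^{-1}$:
\begin{equation*}
f_{Q_F}\!\left(u^{-1}x_0^2,\,u^{-1}x_0y_0,\,u^{-1}y_0^2\right)\;=\;u^{-6}\cdot f_{Q_F}(x_0^2,x_0y_0,y_0^2)\;=\;u^{-6}\cdot u^6\;=\;1.
\end{equation*}
Thus $f_{Q_F}$ represents $1$ over $S$ at the point $(u^{-1}x_0^2,u^{-1}x_0y_0,u^{-1}y_0^2)\in S^3$.

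There is no real obstacle here: the argument is a one-line consequence of Fess's identity together with the fact that the homogeneity degree $6$ is divisible by $2$, so the cube of a square value of $F$ is automatically a sixth power, which can be absorbed by rescaling. The only point worth emphasizing is that $u\in S^\times$ is essential, since this is what allows the rescaling by $u^{-1}$ to remain inside $S$.
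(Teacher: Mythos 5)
Your proof is correct and follows essentially the same route as the paper's: the paper writes $F(a,b)=c^2$ with $c\in S^\times$ and states directly that $c^{-1}\cdot(a^2,ab,b^2)$ is a solution to $f_{Q_F}=1$, which is exactly your combination of Fess's identity with degree-$6$ homogeneity. You have merely made the rescaling step explicit.
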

\begin{proof}
If $F(a,b) = c^2$ with $c\in S^\times$. Then  $c^{-1}\cdot (a^2, ab, b^2)$ is a solution to $f_{Q_F}(x,y,z) = 1$.
\end{proof}

\begin{proposition}\label{characterization of invariant orders of binary quartics}
Let $S = \Z$ or $\Z_p$ for some $p$. 
Suppose $((Q,\langle1,\alpha,\beta,\gamma\rangle),(R,\langle1,\omega,\theta\rangle))$ is a based quartic ring and a compatibly-based cubic resolvent ring over~$S$, such that the index form of $R$ with respect to this basis represents $1$.  
Then there is a binary quartic form $F$ over $S$ and compatible bases $\langle1,\alpha',\beta',\gamma'\rangle$ and $\langle1,\omega',\theta'\rangle$ for $Q$ and $R$ 
such that
\begin{equation*}
1\wedge\alpha\wedge\beta\wedge\gamma=1\wedge\alpha'\wedge\beta'\wedge\gamma',\quad 1\wedge\omega\wedge\theta=1\wedge\omega'\wedge\theta',
\end{equation*} and 
$((Q,\langle1,\alpha',\beta',\gamma'\rangle),(R,\langle1,\omega',\theta'\rangle))$ corresponds to $(A_1, B_F)$ via the correspondence of Theorem~$\ref{thqrpar}$.
In particular, the index forms $f_Q(x,y,z)$ and $f_R(x,y)$ associated to $(Q,\langle1,\alpha',\beta',\gamma'\rangle)$ and $(R,\langle1,\omega',\theta'\rangle)$, respectively, satisfy $$f_Q(x,y,z) = f_R(q_{B_F}(x,y,z), -q_{A_1}(x,y,z)).$$
\end{proposition}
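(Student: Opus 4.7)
The strategy is a two-stage change of basis: first, replace the basis of $R$ by one of the form $\langle 1,\lambda,\lambda^2\rangle$ to exploit the monogenicity, then adjust the basis of $Q/S$ to bring the corresponding pair in $V(S)$ to the standard form $(A_1, B_F)$.

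For the first stage, the hypothesis that $f_R$ represents $1$ furnishes $x_0,y_0\in S$ with $f_R(x_0,y_0)=1$. Setting $\lambda:=x_0\omega+y_0\theta\in R$, the definition of the cubic index form yields $1\wedge\lambda\wedge\lambda^2=1\wedge\omega\wedge\theta$, so that $\omega':=\lambda$ and $\theta':=\lambda^2$ form a compatible basis of $R$ having the same orientation as $\langle 1,\omega,\theta\rangle$; by construction, the relation $\theta'=(\omega')^2$ holds in $R$. Applying Theorem~\ref{thqrpar} to $((Q,\langle 1,\alpha,\beta,\gamma\rangle),(R,\langle 1,\omega',\theta'\rangle))$ produces a new pair $(A',B')\in V(S)$, obtained from $(A,B)$ via the $\SL_2(S)$-action corresponding to the base change on the cubic side.

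The second stage is the main obstacle. One must show that a further $\SL_3(S)$-change of basis on $Q/S$ (preserving $\alpha\wedge\beta\wedge\gamma$) brings $(A',B')$ to the standard form $(A_1,B_F)$ for some binary quartic form $F$. The plan is to exploit the explicit multiplication in the cubic resolvent $R(A',B')$ in terms of $(A',B')$ from~\cite[\S3.2--3.3]{HCLIII}: the coefficients of $1,\omega',\theta'$ appearing in the expansion of $(\omega')^2\in R(A',B')$ are polynomial expressions in the entries of $(A',B')$, and the identity $(\omega')^2=\theta'$ forces these coefficients to be $0$, $0$, and $1$ respectively. These constraints, combined with the $\SL_3(S)$-freedom acting on $Q/S$ and with classical reduction theory for ternary quadratic forms over $\Z$ or $\Z_p$ (whence the stated hypothesis on $S$), should normalize $A'$ to the model $A_1$. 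Once $A'=A_1$, the second form must be $B_F$ by definition for some binary quartic form $F$, and the index form identity $f_Q(x,y,z)=f_R(q_{B_F}(x,y,z),-q_{A_1}(x,y,z))$ is then immediate from Proposition~\ref{prop: quarticcubic}.
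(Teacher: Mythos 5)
Your two-stage strategy matches the paper's approach, and the first stage (passing to the power basis $\langle 1,\lambda,\lambda^2\rangle$ of $R$, preserving the orientation $1\wedge\omega\wedge\theta$) is exactly right. The second stage, however, has two problems.

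First, the constraint on $A'$ is obtained far more directly than you suggest. You need not dig into the multiplication table of $R(A',B')$: since $\theta'\equiv(\omega')^2$ in $R/S$, the defining property of the cubic index form immediately gives $f_R(1,0)=1$, and the formula $f_R(x,y)=4\det(A'x-B'y)$ from Proposition~\ref{prop: quarticcubic} then yields $4\det(A')=1$. The precise statement to invoke at this point is that over $S=\Z$ or $\Z_p$ there is a unique $\GL_3(S)$-equivalence class of symmetric integer-matrix $A$ with $4\det(A)=1$, namely the class of $A_1$; ``classical reduction theory'' gestures at this, but the uniqueness \emph{is} the crux of the argument and should be stated explicitly rather than hoped for. (One should also note that this $\GL_3(S)$-orbit equals the $\SL_3(S)$-orbit of $A_1$, so the orientation of $Q$ can be preserved: if $gA_1g^{T}=A$ with $\det g=-1$, then $(-g)A_1(-g)^{T}=A$ and $\det(-g)=+1$.)

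Second, and this is a genuine gap: it is \emph{not} true that once $A'=A_1$ the companion form ``must be $B_F$ by definition.'' Normalizing $A'=A_1$ leaves $B'$ an arbitrary symmetric integer-matrix form, whereas $B_F$ has vanishing $(1,3)$-entry by construction. One needs a further change of basis, drawn from the stabilizer of $A_1$ in $\SL_3(S)$, to kill the entry $b'_{13}$. The paper performs precisely this extra normalization (``we may assume $A=A_1$ and $b_{13}=0$''), and without it the identification of $(A_1,B')$ with $(A_1,B_F)$ for some binary quartic $F$ is not justified.
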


\begin{proof}
Let $\langle1, \alpha, \beta, \gamma\rangle$ be an basis of $Q$ that is compatible via $\xi$ with the basis $\langle1, \theta, \theta^2\rangle$ of~$R$. Let $(A,B)\in V(S)$ be the corresponding pair under the bijection of Theorem \ref{thqrpar}. 
Because $f_R(1,0) = 1$, we see that $4\cdot \det(A) = 1$. Over $S$, there is a \emph{unique} $\GL_3(S)$-equivalence class of $A\in \Sym^2(S)^\ast$ for which $4\cdot \det(A) = 1$, namely the class of $A_1$.  Via a change of basis, we may assume $A = A_1$ and  $b_{13} = 0$.  The desired binary quartic form is then $F(x,y) := b_{11} x^4 + b_{12} x^3 y + b_{22} x^2 y^2 + b_{23} xy^3 + b_{33} y^4$, with $b_{ij}$ the coefficients of $B$.
\end{proof}

\begin{proposition}\label{condition on binary quartic to be unobstructed}
Let $p$ be a prime. Let $F\in \Z_p[x,y]$ be a binary quartic form and $Q_F$ the corresponding quartic ring over $\Z_p$. Let $\eps\in \{\pm 1\}$. If the curve $\eps z^2 = F(x,y)$ over $\F_p$ has a smooth $\F_p$-point with $z\neq 0$, then the index form of $Q_F$ represents $\eps$.
\end{proposition}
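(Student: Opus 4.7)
The plan is to reduce the claim to Fess's identity (Theorem~\ref{dan's identity}) by Hensel-lifting the given smooth $\F_p$-point on $\eps z^2 = F(x,y)$ to a $\Z_p$-point $(a,b,c)$ satisfying $F(a,b) = \eps c^2$ with $c \in \Z_p^\times$, and then invoking the homogeneity of the ternary sextic $f_{Q_F}$ to scale away the resulting factor of $c^6$.

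For the Hensel step, set $G(x,y,z) := \eps z^2 - F(x,y)$. Smoothness at $(x_0, y_0, z_0)$ says that some partial derivative of $G$ is a unit in $\F_p$ at the point. If $p \neq 2$, the hypothesis $z_0 \neq 0$ makes $\partial G/\partial z = 2\eps z_0$ a unit in $\F_p$, and I can fix arbitrary lifts $\tilde x_0, \tilde y_0 \in \Z_p$ of $x_0, y_0$ and apply single-variable Hensel's lemma to $z \mapsto \eps z^2 - F(\tilde x_0, \tilde y_0)$ at $z_0$ to obtain a unit lift $c \in \Z_p^\times$, setting $a = \tilde x_0$, $b = \tilde y_0$. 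If $p = 2$, the partial $\partial G/\partial z$ vanishes identically mod $p$ irrespective of $z_0$, so smoothness forces $\partial F/\partial x$ or $\partial F/\partial y$ to be nonzero at $(x_0, y_0)$ in $\F_2$; I then fix any unit lift $c \in \Z_2^\times$ of $z_0$ and any lift of the unused planar variable, and apply single-variable Hensel in the remaining variable to produce the last coordinate so that $F(a,b) = \eps c^2$.

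With $(a,b,c) \in \Z_p^3$ in hand, Fess's theorem yields
\[
f_{Q_F}(a^2, ab, b^2) \;=\; F(a,b)^3 \;=\; (\eps c^2)^3 \;=\; \eps c^6.
\]
Because $f_{Q_F}$ is homogeneous of degree $\binom{4}{2} = 6$ in three variables and $c \in \Z_p^\times$, rescaling gives
\[
f_{Q_F}(c^{-1} a^2,\, c^{-1} a b,\, c^{-1} b^2) \;=\; c^{-6} \cdot \eps c^6 \;=\; \eps,
\]
as required. The only real subtlety is the residue characteristic $p = 2$, where one cannot solve for $z$ via Hensel and must instead exploit the nonvanishing of $\partial F/\partial x$ or $\partial F/\partial y$ guaranteed by smoothness; this is precisely why the hypothesis demands a \emph{smooth} point, and why the condition $z_0 \neq 0$ is imposed separately, so that whichever coordinate is solved for last, the lifted $c$ lands in $\Z_p^\times$.
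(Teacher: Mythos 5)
Your proof is correct and follows essentially the same route as the paper's: Hensel-lift the smooth point to a $\Z_p$-solution $(a,b,c)$ with $c\in\Z_p^\times$ and $\eps c^2 = F(a,b)$, then apply Fess's identity together with the degree-$6$ homogeneity of $f_{Q_F}$ to obtain $f_{Q_F}(c^{-1}a^2, c^{-1}ab, c^{-1}b^2) = \eps$. You merely spell out the Hensel step in more detail, in particular the $p=2$ case where one must solve for $x$ or $y$ rather than $z$, which the paper compresses into the phrase ``via Hensel lifting.''
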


\begin{proof}
Via Hensel lifting, we have $x,y\in \Z_p$ and $z\in \Z_p^\times$ such that $\eps z^2 = F(x,y)$. Then, by Theorem \ref{dan's identity}, we have
$I_{Q_F}(z^{-1}\cdot (x^2, xy, y^2)) = z^{-6} F(x,y)^3 = \eps^3 = \eps.$
\end{proof}

\begin{lemma}\label{lem:reps1}
Let $F\in \F_p[x,y]$ be a quartic form with splitting type $(1^3 1)$ or $(1^2 2)$.  Then the curve $z^2 = F(x,y)$ has an $\F_p$-point with $z\neq 0$. If $p > 3$, then the same holds for $F$ of type $(1^2 1 1)$ as well.
\end{lemma}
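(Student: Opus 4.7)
The plan is to normalize $F$ via a change of variables in $\GL_2(\F_p)$ (respectively $\PGL_2(\F_p)$) for each splitting type and then specialize one coordinate to reduce the problem to a conic-point count.

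For splitting type $(1^3 1)$, I would send the simple root of $F$ to $[1:0]$ and the triple root to $[0:1]$, obtaining $F(x,y) = c\, x^3 y$ for some $c \in \F_p^\times$. Then $F(1, c^{-1}) = 1 = 1^2$ immediately gives the sought point, with $z = 1 \neq 0$.

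For splitting type $(1^2 2)$, I would send the repeated linear factor to $x$ and absorb the overall constant, reducing to $F(x, y) = x^2 Q(x, y)$ with $Q$ an irreducible binary quadratic form. For odd $p$, irreducibility forces $P(y) := Q(1, y)$ to be genuinely of degree $2$ (else $x$ would divide $Q$), with discriminant equal to $\mathrm{disc}(Q)$, a nonsquare in $\F_p$; hence $P$ has no roots in $\F_p$. The projective closure of the smooth affine conic $z^2 = P(y)$ then has $p + 1$ $\F_p$-points, at most two at infinity and none on $z = 0$, so at least $p - 1 \geq 1$ affine points have $z \neq 0$, each giving a valid $(x,y,z)$. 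For $p = 2$ I would just observe that $F$ has exactly one root in $\P^1(\F_2)$, so it takes the value $1$ (a square in $\F_2$) at either of the other two points.

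For splitting type $(1^2 1 1)$ with $p > 3$, I would use the $3$-transitivity of $\PGL_2(\F_p)$ on distinct $\F_p$-points of $\P^1$ to send the three roots of $F$ to $\{[1:0], [0:1], [1:1]\}$ with the doubled root at $[0:1]$, yielding $F(x, y) = c\, x^2 y(x - y)$ for some $c \in \F_p^\times$. Setting $x = 1$ reduces the problem to finding $y \in \F_p$ with $c\, y(1 - y)$ a nonzero square, and the projective closure of the smooth affine conic $z^2 = c\, y(1 - y)$ has $p + 1$ $\F_p$-points; removing at most two points at infinity and the two affine points on $z = 0$ (at $y = 0$ and $y = 1$) leaves at least $p - 3 > 0$ affine points with $z \neq 0$. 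The main step throughout is the routine smooth-conic point count, so I expect no serious obstacle; the $p > 3$ hypothesis in the last case is sharp, as e.g.\ $F = 2 x^2 y(x - y)$ over $\F_3$ takes only the values $0$ and $2$ on $\F_3^2 \setminus \{0\}$, neither of which is a nonzero square.
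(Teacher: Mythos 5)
Your argument is correct and proves the lemma; the difference from the paper is mainly that you normalize $F$ by a change of variables and then dehomogenize, while the paper works with $F$ as given and stays projective throughout. For type $(1^3 1)$ the paper's version is a touch slicker: writing $F = L_1^3 L_2$ (absorbing the constant into a factor), the linear map $v \mapsto (L_1(v), L_2(v))$ is a bijection of $\F_p^2$, so $(1,1)$ is hit, giving $F(v) = 1$ directly --- and incidentally the same for $L_1^n L_2$ for every $n \geq 1$. Your explicit evaluation of $c x^3 y$ at $(1, c^{-1})$ unwinds the same fact. For $(1^2 2)$ and $(1^2 1 1)$ the paper writes $F = L_0^2 Q$ with $\disc Q \neq 0$ and counts $\F_p$-points on the smooth projective conic $z^2 = Q(x,y)$ in $\P^2$, which has $p+1$ of them; the bad points are exactly those lying on $L_0 L_1 L_2 = 0$ (just on $L_0 = 0$ when $Q$ is irreducible), and the conic meets that locus in at most $4$ (resp.\ $2$) points, so $p+1 > 2$ handles all of $(1^2 2)$, including $p = 2$, in one stroke, and $p + 1 > 4$ gives $(1^2 1 1)$. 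Your dehomogenization at $x = 1$ forces you to track points at infinity separately and to run a special argument at $p=2$ for $(1^2 2)$; that argument is correct, but the projective formulation avoids the case split entirely. Your remark that $p > 3$ is sharp, with the example $F = 2x^2 y (x-y)$ over $\F_3$ taking only the values $0$ and $2$, is a nice addition not present in the paper.
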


\begin{proof}
We first consider the case $(1^3 1)$.  More generally, if $L_1$ and $L_2$ are linearly independent linear forms and $n\geq 1$, then the equation $1 = L_1(v)^n L_2(v)$ has a solution $v\in \F_p^2$. Indeed, the map $(L_1(v), L_2(v)): \F_p^2 \to \F_p^{2}$ is surjective by hypothesis, so $(1,1)$ is in its image.

If $F$ has type $(1^2 1 1)$ or $(1^2 2)$, then  $F = L_0^2 Q$ for some linear form $L_0$ and quadratic form $Q$ with nonzero discriminant. The conic $z^2 = Q(x,y)$ has $p+1 > 2$ points over~$\F_p$. If $Q$ is irreducible, then none of these points have $z = 0$, and at most two of them intersect the line $L_0 = 0$.  Hence  if $F$ has splitting type $(1^22)$, then $z^2 = F(x,y)$ has an $\F_p$-point with $z \neq 0$.
If $Q = L_1L_2$ is reducible, then the conic intersects $L_0L_1L_2 = 0$ in at most 4 points. Thus, if $p > 3$, then $p + 1 > 4$ and there is at least one solution to $z^2 = F(x,y)$ with $z \neq 0$.
\end{proof}

Finally, we show that the index form of an order in a number field of any degree always represents $1$ and $-1$ over $\R$, and hence we need not consider archimedean places when studying local obstructions to monogenicity.

\begin{lemma}\label{lem:infinity}
The index form $f_\O$ of an order $\O$ in a number field $L\neq \Q$ represents both $1$ and $-1$ over $\R$.
\end{lemma}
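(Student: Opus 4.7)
The plan is to view $f_\O$ as a continuous homogeneous polynomial on the real algebra $L_\R := L\otimes_\Q\R$, exhibit two points of $L_\R$ where it takes values of opposite sign, and then use its positive-degree homogeneity to rescale those values to $+1$ and $-1$. To this end I would fix a $\Z$-basis $1,\omega_1,\dots,\omega_{n-1}$ of $\O$ and list the embeddings $\sigma_1,\dots,\sigma_n\colon L\hookrightarrow\C$ so that $\sigma_1,\dots,\sigma_r$ are real and $\sigma_{r+k},\sigma_{r+s+k}$ form a complex-conjugate pair for each $1\le k\le s$.

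First I would record the standard Vandermonde expression
\[
f_\O(\alpha)\;=\;\frac{\prod_{i<j}(\sigma_i(\alpha)-\sigma_j(\alpha))}{\det(\sigma_i(\omega_{j-1}))_{i,j}}
\]
(up to a global sign depending on the choice of basis) for all $\alpha\in L_\R$. Both numerator and denominator are either real or purely imaginary, with the same parity determined by $s$, so the ratio is always real; hence $f_\O$ genuinely extends to a real-valued polynomial on $L_\R$. Since $L\neq\Q$, any $\alpha\in L$ generating $L/\Q$ has distinct conjugates $\sigma_i(\alpha)$, making the Vandermonde product nonzero, so $f_\O$ is not identically zero on $L_\R$.

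The main step is to produce a sign flip using the natural symmetries of $L_\R\cong\R^r\times\C^s$. If $s\ge 1$, I would start with a generator $\alpha\in L$ and form $\alpha'\in L_\R$ by complex-conjugating the component of $\alpha$ in (say) the first complex factor; this swaps the values $\sigma_{r+1}(\alpha)$ and $\sigma_{r+s+1}(\alpha)$ while fixing all other $\sigma_i(\alpha)$. If instead $s=0$, then $L$ is totally real with $r=n\ge 2$, and I would form $\alpha'$ from $\alpha$ by swapping its values in two of the real components of $L_\R$, again inducing a transposition of two of the $\sigma_i(\alpha)$. In both cases the usual behavior of Vandermonde products under transpositions should give $f_\O(\alpha')=-f_\O(\alpha)$: the one factor indexed by the swapped pair flips sign, while for each other index $l$ the two factors joining $l$ to the swapped pair get interchanged, with no net sign change. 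This combinatorial sign-flip check is the only nontrivial step, and it is essentially routine.

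Having found $\alpha,\alpha'\in L_\R$ with $f_\O(\alpha)$ and $f_\O(\alpha')$ of opposite signs, I would rescale each by an appropriate real scalar --- using that $f_\O$ is homogeneous of degree $\binom{n}{2}\ge 1$ --- to obtain points at which $f_\O$ takes the exact values $+1$ and $-1$, completing the proof.
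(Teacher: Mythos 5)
Your proof is correct, and the overall ingredients (the Vandermonde-times-$\sqrt{\Disc}$ factorization of the index form over $\C$, and the fact that $n\ge 2$) are the same as the paper's, but the mechanism you use to extract a sign change is genuinely different. The paper isolates a single linear factor of $f_\O$ that is real (up to a factor of $\sqrt{-1}$): in the totally real case any factor works, and otherwise a factor coming from a complex-conjugate pair of embeddings does; having a real linear factor, $f_\O$ takes every real value. You instead work on $L_\R\cong\R^r\times\C^s$ and explicitly exhibit two points with $f_\O$ of opposite sign, by applying a transposition of two coordinates (complex conjugation of a $\C$-factor when $s\ge1$, swapping two $\R$-factors when $s=0$), using that the Vandermonde numerator is alternating while the denominator $\det(\sigma_i(\omega_{j-1}))$ is fixed; then you rescale by homogeneity to hit $\pm1$ exactly. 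The two arguments are dual: your transposition is precisely reflection across the zero hyperplane of the paper's real linear factor, so crossing that hyperplane flips the sign. The paper's version is shorter and avoids the rescaling step; yours is more hands-on and makes the sign change constructively visible, at the small cost of needing to note that $f_\O$ extends to a real-valued function on $L_\R/\R$ and to invoke homogeneity of positive degree at the end. Both are complete; I would only suggest remarking explicitly that the denominator is a nonzero constant (so the transposition acts only on the numerator) and that the generator $\alpha$ guarantees the initial value is nonzero, which you do note.
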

\begin{proof}
Let $n\geq 2$ be the degree of $L$, and let $\sigma_1,\ldots,\sigma_n$ denote the distinct  embeddings $L\to\C$.
Let $ 1,\theta_1,\ldots,\theta_{n-1}$ be a $\Z$-basis for $\O$. Then, over $\C$, we have the factorization
\begin{equation*}
f_\O(x_1,\ldots,x_{n-1}) = \pm   
\frac1{\sqrt{\Disc(\O)}}\prod_{1\leq i<j\leq n}
\left(\sum_{k=1}^{n-1}
(\sigma_i(\theta_k)-\sigma_j(\theta_k))x_k\right).
\end{equation*}\noindent
It suffices to check that at least one of the linear factors of $f_\O$ above is in fact defined over~$\R$. Indeed, if $L$ is totally real, then all factors are defined over~$\R$. Otherwise, choose a factor corresponding to $(i,j)$ where $\sigma_i$ and $\sigma_j$ are complex conjugate embeddings; then, upon removing a factor of $\sqrt{-1}$, we see that this factor is defined over $\R$. 
Since $f_{\O_L}$ has a real linear factor, it takes on all real values, and in particular both  $1$ and $-1$.\end{proof}

\section{Construction of quartic fields with no local obstruction to being monogenic}\label{sec:construction}
We use the family 
of discriminants $\Sigma_n$ from Section \ref{sec:cubicprelims} and assume that $n$ is a multiple of $30=2 \cdot 3 \cdot 5$.
We denote the family of cubic fields  having discriminants in $\Sigma_n$ by $F_3$. 

For each $K\in F_3$, let $\mathcal{U}_K$ be a set of representatives for the nonzero elements of the finite group $\left(\O_K^\times / \O_K^{\times2}\right)_{\Nm = 1}$ of norm-one square-classes in $\O_K^\times$. This set has size 3 if $\Disc(K) > 0$  and size 1 if $\Disc(K) < 0$. For each $u\in \mathcal{U}_K$, since $K$ is an $S_3$-cubic field, the Galois closure of $K(\sqrt{u})$ is an $S_4$-field \cite[Lem.\ 4--5]{baily}, and thus contains a quartic field $L_u$, unique up to conjugacy. Consider the family $F_4 = \{L_u : K\in F_3, u\in\mathcal{U}_K\}$ consisting of all of the quartic fields $L_u$ arising in this way from all $K\in F_3$.

\begin{lemma}\label{lem:index2}
Let $L\in F_4$, and let $K$ be its cubic resolvent field. Let $R$ be the cubic resolvent ring of $\O_L$. Then $R \otimes_\Z \Z_p \simeq \O_K \otimes_\Z \Z_p$ for $p > 2$.  If $p = 2$, then $R\otimes_\Z \Z_2$ is isomorphic to $R_0\simeq \Z_2[\sqrt[3]{2}]$, $R_1\simeq \Z_2[\sqrt[3]{4}]$, or $R_3\simeq \Z_2[2(\sqrt[3]{2}+\sqrt[3]{4})]$, and hence the index of $R$ in $\O_K$ is equal to $2^r$ for  $r = 0$, $1$, or $3$, respectively. 
\end{lemma}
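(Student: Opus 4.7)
My plan is to compare the $p$-adic valuations $v_p(\Disc L_u)$ and $v_p(\Disc K)$ at each prime $p$, exploiting the relation $\Disc \O_{L_u} = \Disc R = [\O_K : R]^2 \cdot \Disc \O_K$, which follows from the identity $\Disc Q = \Disc C$ between a quartic ring and its cubic resolvent. This reduces the lemma to determining the local index $v_p([\O_K : R])$ at each $p$.

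For odd primes, the key observation I will use is that the Galois closure $\tilde M$ of $L_u/\Q$ is obtained from the Galois closure $\tilde K$ of $K/\Q$ by adjoining the square roots of the three $\Gal(\tilde K/\Q)$-conjugates $u_1, u_2, u_3$ of $u$, all of which are units in $\O_{\tilde K}^\times$. Hence each $\tilde K(\sqrt{u_i})/\tilde K$ has conductor dividing $(4)$, so $\tilde M/\tilde K$ is unramified at every prime above any odd $p$. Identifying $\Gal(\tilde M/\Q) \cong S_4$ with $V_4 := \Gal(\tilde M/\tilde K)$ as the normal Klein four subgroup, this forces $I_p \cap V_4 = \{e\}$ for odd $p$. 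By the Artin conductor formula, $v_p(\Disc L_u)$ and $v_p(\Disc K)$ are the Artin conductors of the permutation representations of $S_4$ on the four letters $\{1,2,3,4\}$ (giving $L_u$) and on the three pairings thereof (giving $K$). A direct case check over $I_p \in \{e, C_2 = \langle (ij) \rangle, C_3, S_3\}$, incorporating wild higher ramification at $p = 3$ (with $G_1 = C_3$), will confirm that these two conductors always agree. This yields $v_p(\Disc L_u) = v_p(\Disc K)$ and hence $R \otimes_\Z \Z_p \cong \O_K \otimes_\Z \Z_p$ for every odd $p$.

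At $p = 2$, by construction of $F_3$ (the local condition $(d/2) = 1$), every $K \in F_3$ satisfies $K \otimes \Q_2 \cong \Q_2(\sqrt[3]2)$ and $\O_K \otimes \Z_2 \cong \Z_2[\sqrt[3]2]$, and moreover $\tilde K/K$ is unramified at 2, so the inertia of $\tilde K/\Q$ at 2 equals $C_3$. Therefore $I_2 \subset S_4$ projects onto $C_3$ via $S_4 \to S_3$. Since $I_2 \cap V_4$ must be normal in $I_2$ and hence $C_3$-stable inside $V_4$, it is either $\{e\}$ or $V_4$, giving $I_2 \in \{C_3, A_4\}$. In the case $I_2 = C_3$, the argument from the odd-prime part applies verbatim and yields $r = 0$ and $R \otimes_\Z \Z_2 \cong R_0$. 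In the case $I_2 = A_4$, the wild inertia is $G_1 = V_4$, and each subsequent $G_i$ ($i \geq 1$) is a normal subgroup of $A_4$ contained in $V_4$, hence equal to $V_4$ or $\{e\}$. Letting $m$ be the largest index with $G_m = V_4$, the Artin conductor formula gives $v_2(\Disc L_u) = 3 + m$, while the conductor-discriminant formula applied to the $V_4$-extension $\tilde M/\tilde K$, together with the Galois-conjugacy of its three quadratic subextensions (forcing a common conductor $c_u$), gives $m = c_u - 1$. A local analysis of the square-class structure on norm-one units in $\O_{K_\p}^\times$ (with $v_{K_\p}(2) = 3$) will show that $c_u \in \{2, 6\}$, yielding $m \in \{1, 5\}$ and $r = (m+1)/2 \in \{1, 3\}$.

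Finally, I will identify $R \otimes_\Z \Z_2$ with the claimed ring $R_r$ for each $r \in \{0, 1, 3\}$ via the parametrization of Theorem~\ref{thqrpar}: the quartic ring $\O_{L_u} \otimes \Z_2$ corresponds to a pair $(A, B) \in V(\Z_2)$, and its cubic resolvent $R(A, B)$ can be computed explicitly to yield $R_0 = \Z_2[\sqrt[3]2]$, $R_1 = \Z_2[\sqrt[3]4]$, or $R_3 = \Z_2[2(\sqrt[3]2 + \sqrt[3]4)]$, with indices $1$, $2$, $8$ in $\Z_2[\sqrt[3]2]$ respectively. The main obstacle will be the exclusion of the potential conductor $c_u = 4$ (and hence of the index $r = 2$) from the list of cases; this requires a careful local analysis of norm-one units in $\O_{K_\p}^\times$ at $p = 2$, which I expect to be the most delicate step of the argument.
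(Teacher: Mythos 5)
Your high-level strategy is sound and the reduction is correct, but the approach differs from the paper's in two respects, and the crucial computational step is left open.

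For odd $p$, you run a full Artin-conductor argument: since each $u_i$ is a unit, $\tilde M/\tilde K$ is unramified at odd primes, so $I_p\cap V_4=\{e\}$; the subgroups of $S_4$ meeting $V_4$ trivially are precisely the conjugates of subgroups of the point-stabilizer $S_3$, and on such a subgroup $\mathrm{std}|_{S_3}=\rho_2\oplus\mathbf 1$, so $f_p(\mathrm{std})=f_p(\rho_2)$. This is correct (including the $p=3$ wild case, where $I_3\simeq S_3$ with $G_1=C_3$), but it is heavier than what the paper does: the paper simply uses that $\Disc(L_u)=\Disc(K)\cdot\Nm_{K/\Q}(\disc(K(\sqrt u)/K))$ and the right-hand factor is supported at $2$ since $u$ is a unit. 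Your route proves the same relative-discriminant identity in effect, just from the Galois side.

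At $p=2$ your reduction is also correct: $\tilde K/K$ is unramified at $2$, so the inertia image in $S_3$ is $C_3$; since $C_3$ acts transitively on $V_4\setminus\{e\}$, the only $C_3$-stable subgroups of $V_4$ are $\{e\}$ and $V_4$, forcing $I_2\in\{C_3,A_4\}$; when $I_2=A_4$ the jumps $G_i$ ($i\ge1$) must be $V_4$ or trivial, and your formula $v_2(\Disc L_u)=3+m$ together with $v_2(\Disc K)=2$ gives $2r=m+1$, matching the paper's $(v_2,r)\in\{(2,0),(4,1),(8,3)\}$ provided $m\in\{0,1,5\}$. However, the actual content of the lemma — that $m=3$ (equivalently conductor $c_u=4$, equivalently $r=2$, $v_2(\Disc L_u)=6$) cannot occur, and that the three nontrivial square classes of norm-one units in $\Z_2[\sqrt[3]2]^\times$ realize exactly the discriminant valuations $4,8,8$ — is exactly the local computation the paper performs, and you defer it ("the most delicate step"). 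Without it, your argument proves $r\in\{0,1,2,3\}$ with $r\le3$, not the stated $r\in\{0,1,3\}$. Similarly, the explicit identification of $R\otimes\Z_2$ with $R_0$, $R_1$, $R_3$ via Theorem~\ref{thqrpar} is the same computation the paper invokes, and you leave it to be done. So while your Galois-theoretic scaffolding is valid and in fact illuminating, the proposal as written establishes the framework but not the specific conclusions of the lemma; the decisive enumeration of norm-one unit square classes in $\O_{K_2}^\times$ and their conductors still has to be carried out.
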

\begin{proof}
Since the extension $K(\sqrt{u})/K$ is unramified away from $2$, the discriminant $\Disc(L)$ agrees with $\Disc(K)$ up to powers of $2$. By the definition of $\Sigma_n$, the cubic field $K$ is totally ramified at $2$, and since the unique totally ramified cubic extension of $\Q_2$ is $K_2 :=\Q_2(\sqrt[3]{2})$, we have $K \otimes_\Q \Q_2 \simeq K_2$ and $\O_{K_2}\simeq R_0$.  The group of squareclasses of norm-one units in $R_0^\times$ is of order 4. The trivial squareclass gives rise to the quartic $\Q_2$-algebra $L_0=K_2 \times \Q_2$ having discriminant valuation $2$, while the three nontrivial elements give rise to three distinct $S_4$-quartic fields $L_1$, $L_3$, and $L_3'$ over $\Q_2$ with discriminant valuations $4$, $8$, and $8$, respectively. A computation shows the cubic resolvent rings of $L_0$, $L_1$, $L_3$, and $L_3'$ to be $R_0$, $R_1$, $R_3$, and~$R_3$, respectively, which have the stated indices in $R_0$.
\end{proof}

\begin{lemma}\label{lem:resolventring}
Let $L\in F_4$, and let $R$ be the cubic resolvent ring of $\O_L$.  Then $R$ is has no local obstruction  to being monogenic.
\end{lemma}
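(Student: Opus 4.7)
The plan is to reduce the statement to showing that $R\otimes_\Z\Z_p$ is monogenic as a $\Z_p$-algebra for every prime $p$. The cubic index form $f_R$ is homogeneous of odd degree $\binom{3}{2}=3$, so $f_R(-x,-y)=-f_R(x,y)$; hence over any local ring, $f_R$ represents $1$ if and only if it represents $-1$. Consequently, for a rank-$3$ ring the condition of having no local obstruction to being monogenic collapses to the assertion that, for every prime $p$, $f_R$ represents a unit over $\Z_p$, which is equivalent to the monogenicity of $R\otimes_\Z\Z_p$ as a $\Z_p$-algebra. No issue of global sign consistency arises, because switching the orientation of $R$ replaces $f_R$ by $-f_R$, which represents the same set of values by the odd-degree observation.

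At $p=2$, Lemma~\ref{lem:index2} identifies $R\otimes_\Z\Z_2$ with one of the orders $R_0=\Z_2[\sqrt[3]{2}]$, $R_1=\Z_2[\sqrt[3]{4}]$, or $R_3=\Z_2[2(\sqrt[3]{2}+\sqrt[3]{4})]$, each of which is monogenic over $\Z_2$ by its very description.

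For $p>2$, Lemma~\ref{lem:index2} gives $R\otimes_\Z\Z_p\simeq\O_K\otimes_\Z\Z_p$, so it suffices to verify monogenicity of $\O_K\otimes_\Z\Z_p$. The plan is a case analysis based on the ramification of $p$ in $K$, which is dictated by the explicit factorization $\disc(K)=-27dn^2$ and the hypotheses defining $\Sigma_n$. If $p\nmid\disc(K)$, then $\O_K\otimes_\Z\Z_p$ is an \'etale cubic $\Z_p$-algebra, and is monogenic because $p\geq 3$ permits the choice of a generator with sufficiently distinct residues across the unramified factors. If $p=3$, or if $p\mid n$ with $p\neq 3$, then the discriminant valuation computations from the proof of Proposition~\ref{prop:averagecount}, together with the fact that for $p\equiv 2\pmod 3$ there is a unique totally ramified cubic extension of $\Q_p$, force $K\otimes_\Q\Q_p$ to be a totally ramified cubic field; hence $\O_K\otimes_\Z\Z_p$ is a local $\Z_p$-algebra, generated over $\Z_p$ by any uniformizer. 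Finally, if $p\mid d$ with $p\nmid 6n$, then $v_p(\disc K)=1$ forces $K\otimes_\Q\Q_p\simeq\Q_p\times F_p$ for a ramified quadratic extension $F_p$ of $\Q_p$, and one checks that $\alpha=(1,\pi)$ (with $\pi$ a uniformizer of $F_p$) generates $\Z_p\times\O_{F_p}$ over $\Z_p$, since its characteristic polynomial $(t-1)(t^2-\pi^2)$ has coprime factors in $\Z_p[t]$.

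The only nontrivial input is the case $p=2$, which rests on the explicit identification, in Lemma~\ref{lem:index2}, of the cubic resolvent ring of each $S_4$-\'etale quartic $\Q_2$-algebra containing $\Q_2(\sqrt[3]{2})$; beyond that, the verification is a routine local check for each ramification type permitted by the definition of $\Sigma_n$.
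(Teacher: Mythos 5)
Your proof is correct, and it takes a somewhat more self-contained route than the paper's. The key structural observation --- that for a binary cubic form $f_R(-x,-y)=-f_R(x,y)$, so $f_R$ represents $1$ over a local ring if and only if it represents $-1$, hence ``no local obstruction'' for a cubic ring collapses to local monogenicity at every prime --- is a clean and general way to dispose of the sign/orientation issue; the paper instead handles the sign at $p=2$ by citing a specific lemma (\cite[Lem.~9]{ABS}) about primitive binary cubic forms with even discriminant. Your treatment of $p=2$ then coincides with the paper's (both rest on Lemma~\ref{lem:index2}). For $p>2$, the paper simply invokes \cite[Lem.~18]{ABS} after noting $R\otimes\Z_2$ is maximal there; you instead re-derive that lemma by a direct case analysis on the splitting type of $p$ in $K$ dictated by $\Disc(K)=-27dn^2$ and the congruence conditions defining $\Sigma_n$ --- \'etale for $p\nmid\Disc(K)$, totally ramified for $p=3$ and for $p\mid n$, and $(1^21)$ for $p\mid d$. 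This case analysis is correct (and complete, since $\bigl(\tfrac{d}{p}\bigr)=1$ for $p\mid n$ rules out any overlap between $p\mid d$ and $p\mid n$). In short, same skeleton, but you substitute the elementary odd-degree observation and an explicit local analysis for the two citations to \cite{ABS}.
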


\begin{proof}
By Lemma \ref{lem:index2}, $R \otimes_\Z \Z_2$ is monogenic, and hence the index form $f_R$ is a primitive binary cubic form. Since $\Disc(f_R)$ is divisible by $2$, \cite[Lem.\ 9]{ABS} implies that $f_R$ represents both $+1$ and $\minus 1$ over~$\Z_2$. For $p > 2$, the ring $R \otimes_\Z \Z_p$ is maximal, and the proof is exactly as in \cite[Lem.\ 18]{ABS}.   
\end{proof}

\begin{proposition}\label{unob'}
If $L\in F_4$, then $L$ has no local obstruction to being monogenic.
\end{proposition}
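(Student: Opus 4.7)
The plan is to combine the monogenicity of the cubic resolvent ring (Lemma \ref{lem:resolventring}) with the parametric description of quartic rings by binary quartic forms (Proposition \ref{characterization of invariant orders of binary quartics}), and then apply the point-finding criterion of Proposition \ref{condition on binary quartic to be unobstructed} at each prime $p$. Concretely, fix $L \in F_4$ with cubic resolvent $K$ and cubic resolvent ring $R \subseteq \O_L$. For every prime $p$, Lemma \ref{lem:resolventring} gives that $R \otimes \Z_p$ is monogenic, so Proposition \ref{characterization of invariant orders of binary quartics} yields a binary quartic form $F_p \in \Z_p[x,y]$ and compatible bases realizing $\O_L \otimes \Z_p \simeq Q_{F_p}$ together with the identity $f_L(x,y,z) = f_R(q_{B_{F_p}}(x,y,z), -q_{A_1}(x,y,z))$. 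By Proposition \ref{condition on binary quartic to be unobstructed}, it then suffices to produce, for each prime $p$ and for some sign $\epsilon \in \{\pm 1\}$, a smooth $\F_p$-point with $z \neq 0$ on the curve $\epsilon z^2 = F_p(x,y)$; the sign $\epsilon$ must eventually be chosen consistently across all $p$.

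The splitting type of $F_p \bmod p$ matches that of the $\Q_p$-algebra $\O_L \otimes \Q_p$, whose ramification at odd primes is inherited from $K$ (the extension $K(\sqrt u)/K$ is Kummer of a unit and hence unramified away from $2$). For $p \geq 5$ unramified in $K$, $\O_L \otimes \Z_p$ is a product of unramified extensions of total rank $4 \leq p$, which is monogenic by the primitive element theorem plus CRT, so $f_L$ represents a unit (and hence $\pm 1$ after rescaling by a 6th power). For $p \mid n$ with $p > 3$, $K$ is tamely totally ramified, forcing $F_p \bmod p$ to have splitting type in $\{(1^4), (1^31), (1^211), (1^22)\}$. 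The last three are handled by Lemma \ref{lem:reps1}; type $(1^4)$ reduces, via a suitable choice of $\epsilon$, to solving $\epsilon c \in (\F_p^\times)^2$, where $F_p \equiv c\ell^4 \pmod p$ for a linear form $\ell$. At odd $p \mid d$, $K \otimes \Q_p$ has a ramified quadratic factor, yielding type $(1^211)$ or $(1^22)$ for $F_p \bmod p$, again handled by Lemma \ref{lem:reps1}.

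The genuinely delicate cases are $p = 2$ and $p = 3$. At $p = 3$, $K \otimes \Q_3$ is a wildly totally ramified cubic with $v_3(\Disc(K)) = 5$; enumerating the three such isomorphism classes and the compatible quartic extensions shows that $F_3 \bmod 3$ falls into an admissible splitting type, completed by direct verification when Lemma \ref{lem:reps1} does not apply (namely for type $(1^211)$, which requires $p > 3$). At $p = 2$, Lemma \ref{lem:index2} restricts $\O_L \otimes \Z_2$ to one of four explicit algebras $L_0, L_1, L_3, L_3'$, for each of which an explicit monogenic generator (equivalently, a smooth $\F_2$-point on $\epsilon z^2 = F_2$ with $z \neq 0$ for an appropriate $\epsilon$) is exhibited directly. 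For the uniform sign condition, we observe that at primes $p$ with $-1 \in (\Z_p^\times)^6$ both signs are equivalent, so only finitely many primes demand coordinated sign choices, which can be reconciled by hand.

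The main obstacle is the detailed local analysis at $p = 2$ and $p = 3$: the wild ramification and the construction-dependent choice of unit $u$ force careful bookkeeping of $F_p \bmod p$ and of the corresponding smooth point on $\epsilon z^2 = F_p$. The remaining cases reduce, via Propositions \ref{characterization of invariant orders of binary quartics}--\ref{condition on binary quartic to be unobstructed} and Lemma \ref{lem:reps1}, to essentially mechanical verifications of splitting types.
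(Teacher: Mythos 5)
The overall framework you propose — combine Lemma~\ref{lem:resolventring}, Proposition~\ref{characterization of invariant orders of binary quartics}, and Proposition~\ref{condition on binary quartic to be unobstructed}, then verify prime by prime — is exactly what the paper does. However, two of your steps have genuine gaps.

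\textbf{The unramified case $p \geq 5$.} You argue that since $\O_L \otimes \Z_p$ is monogenic (by primitive element plus CRT), the index form ``represents a unit (and hence $\pm 1$ after rescaling by a 6th power).'' This inference is false: rescaling $\theta \mapsto c\theta$ multiplies the value by $c^6$, so representing a unit $u$ only yields $\pm 1$ when $u \in \pm(\Z_p^\times)^6$, and for $p \not\equiv \pm 1 \pmod{12}$ the index $[\Z_p^\times : (\Z_p^\times)^6]$ exceeds $2$. Concretely, $\Z_5^4$ is monogenic over $\Z_5$, but with respect to the basis $\langle 1, e_2, e_3, e_4\rangle$ its index form is (up to sign) the Vandermonde $xyz(y-x)(z-x)(z-y)$, which on $\F_5^3$ takes only the values $\{0, 2, 3\}$ — never $\pm 1$. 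So ``locally monogenic'' does not imply ``index form represents $\pm 1$.'' The paper sidesteps this by showing that both $+1$ and $-1$ are represented, via the Hasse--Weil bound on the genus-one curve $\eps z^2 = F(x,y)$ (for $p > 7$ and $p \nmid dn$, so that $F$ is separable mod $p$), with a separate count of the $z = 0$ points for $p = 7$.

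\textbf{Sign coordination.} You claim that $-1 \in (\Z_p^\times)^6$ for all but finitely many $p$, so that a consistent sign $\eps$ across all primes can be reconciled by hand at finitely many places. This is false: for every $p \equiv 3 \pmod 4$ (an infinite set), $-1$ is not even a square in $\Z_p^\times$, so $-1 \notin (\Z_p^\times)^6$ there. The paper resolves the coordination issue by proving the stronger statement that $f_{\O_L}$ represents \emph{both} $+1$ and $-1$ over $\Z_p$ for every odd $p$ (Hasse--Weil or Lemma~\ref{lem:reps1} automatically provide both choices of $\eps$), so that the only sign-determining prime is $p = 2$, which is then checked by a finite computation mod $8$.

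Two smaller points: at odd $p \mid n$ with $p > 2$, the quartic splitting type is forced to be $(1^3 1)$ (the discriminant valuations must match since $R \otimes \Z_p = \O_K \otimes \Z_p$), so your case $(1^4)$ cannot occur; this is good, because your treatment of $(1^4)$ via $\eps c \in (\F_p^\times)^2$ also fails when $p \equiv 1 \pmod 4$ and $c$ is a non-residue. And $p = 3$ is likewise simply type $(1^3 1)$, covered by Lemma~\ref{lem:reps1} with no need to enumerate the three wildly ramified cubics.
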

\begin{proof}
Let $R$ be the cubic resolvent ring of $\O_L$, and set $K = R \otimes_\Z \Q$. 

First assume $p > 2$. Since $R$ is locally unobstructed (Lemma \ref{lem:resolventring}), we have $\O_L \otimes_\Z \Z_p \simeq Q_F$ for some binary quartic form $F \in \Z_p[x,y]$, and $f_{\O_L}(x,y,z) = f_{Q_{F}}((x,y,z)\cdot g)$ for some $g\in \SL_3(\Z_p)$ by Proposition~\ref{characterization of invariant orders of binary quartics}. We will show that both 
$f_{Q_{F}}(x,y,z) 
=1$ and 
$f_{Q_{F}}(x,y,z) 
=-1$ 
have $\Z_p$-solutions. By Proposition \ref{condition on binary quartic to be unobstructed}, it suffices to show that each curve $C_{F, \eps}: \eps z^2 = F(x,y)$ with $\epsilon\in\{\pm1\}$ has an $\F_p$-point with $z\neq 0$.  Because $Q_{F}\simeq \O_L\otimes_\Z \Z_p$ is maximal, the splitting type of $F$ modulo $p$ matches the splitting type of $p$ in $L$, and we break into cases based on whether $p$ divides $\Disc(K) = \minus27dn^2$ or not.

If $p\nmid dn$, then $ p \nmid \Disc(F)$ and, as long as $p > 7$, the Hasse--Weil bound implies that $C_{F, \eps}$ has an $\F_p$-point with $z\neq 0$. For $p\mid d$, the splitting type of $p$ in $K$ is $(1^2 1)$, and so in $L$ it is $(1^2 2)$ or $(1^2 1 1)$. Thus $C_{F, \eps}$ has an $\F_p$-point with $z\neq 0$ thanks to Lemma \ref{lem:reps1}; note that $p > 3$ when $p \mid d$, since we have insisted that $6 \mid n$.  If $p = 7$ and $p\nmid d$, the splitting type of $p$ in $K$ is $(12)$ because $\left(\frac{d}{7}\right)\neq 1$. Hence the splitting type of $p$ in $L$ is either $(4)$ or $(112)$, and so $F$ has at most $2$ roots over $\F_p$. Thus the Hasse--Weil bound implies that $C_{F, \eps}$ has an $\F_7$-point with $z\neq 0$.   For $p\mid n$ (which includes $p = 3$ and $p = 5$, by assumption), the splitting type of $p$ in $K$ is $(1^3)$, so that the splitting type of $p$ in $L$ is $(1^3 1)$. Hence  the splitting type of $F$ modulo $p$ is $(1^3 1)$ and Lemma \ref{lem:reps1} implies $C_{F, \eps}$ has an $\F_p$-point with $z\neq 0$.

Next assume $p = 2$. We prove that 
the index form of $\O_L$ represents 
$+1$ or $\minus 1$ over~$\Z_2$. Since $2$ is totally ramified in $K$, we have 
$K\otimes_\Q \Q_2 \simeq \Q_2(\sqrt[3]{2})$.  Hence either $L\otimes_\Q \Q_2\simeq \Q_2(\sqrt[3]{2}) \times \Q_2$, implying that the splitting type of $2$ in $L$ is $(1^3 1)$, or else $L\otimes_\Q \Q_2$ is a field in which $2$ splits as $(1^4)$.  

In the first case, we conclude by the same argument that we used to deal with the primes $p\mid n$ with $p > 2$.
In the second case, $L\otimes_\Q \Q_2$ is an $S_4$-quartic field. 
As mentioned earlier, there are three quartic $S_4$-extensions of $\Q_2$. 
Explicitly, we have $\O_L\otimes_\Z \Z_2\cong \Z_2[x]/(g)$ for some $g\in \{x^4 + 2x + 2, x^4 + 4x + 2, x^4 + 4x^2 + 4x + 2\}$. 
Therefore, 
there exists $u\in \Z_2^\times$ such that $f_{\O_L}(x,y,z) = u \cdot f_{Q_g}(x,y,z)$.  Thus it suffices to check that, for all three quartic forms $g$
and all units $u\in \Z_2^\times$, the equation 
$f_{Q_g}(x,y,z) = 1$ or $f_{Q_g}(x,y,z) = \minus 1$ 
has a solution. Since we may scale $u$ by an element of $(\Z_2^\times)^6 = (\Z_2^\times)^2 = 1 + 8\Z_2$, we need only verify this statement modulo $8$, which is a quick check.
\end{proof}

\section{Proof of Theorems \ref{S4main} and \ref{thm:notbinary}: the case $r = 2$}\label{sec:thm1proof}
We prove that a positive proportion of positive discriminant (resp.\ negative discriminant) quartic fields are not even binary, despite having no local obstruction to being monogenic. The result for negative discriminants immediately implies the case $r \!=\! 2$ of Theorems \ref{S4main} and~\ref{thm:notbinary}. 

For each choice of sign $\pm$, write $F_3^\pm$ and $F_4^\pm$ for the subsets of $F_3$ and $F_4$ having  discriminants of sign $\pm$. 
Recall from Section \ref{sec:cubicprelims} the subset $U_n^\pm \subset \Sigma_n^\pm$. 
By Proposition \ref{prop:r3=0}, if $D\in U_n^\pm$, then the number of $L\in F_4$ 
having cubic resolvent field $K$ of discriminant $D$
is 
\[\bigl[\#\bigl(\O_K^\times / \O_K^{\times2}\bigr)_{\Nm = 1} - 1\bigr]  \cdot (2 \mp 1)  2^t = 3\cdot 2^t.\] 
For $D\in \Sigma_n$, let $B_D$ denote the number of binary fields $L\in F_4$ having cubic resolvent field of discriminant $D$.

For each nonzero $D \in \Z$, we define the elliptic curve $E^D \colon y^2 = 4x^3 + D$. If $R_L$ is the cubic resolvent ring of $\O_L$, then $f_{R_L}$ gives rise to two nonzero elements $\alpha_L\in\Sel_3(E^{\Disc(L)})$ by Lemma \ref{lem:resolventring} and the construction of \cite[\S3.2]{ABS}.  Moreover, $\alpha_L = \pm \alpha_{L'}$ if and only if $R_L \cong R_{L'}$, as in the proof of \cite[Lem.\ 15]{ABS}.  If $L$ is binary, then $R_L$ is monogenic by Proposition \ref{characterization of invariant orders of binary quartics}, and hence $\pm \alpha_L$ both lie in  $E^{\Disc(L)}(\Q)/3E^{\Disc(L)}(\Q)$.
Thus, by Lemma \ref{lem:index2},
$$9 + 2B_D\leq 3(3^{\rank{E^D}} + 3^{\rank{E^{4D}}} + 3^{\rank{E^{64D}}}).$$ Since  $g(x):=x^{\log_3{2}}$ is a subadditive function, i.e., $g\!\left(\sum a_i\right)\leq \sum g(a_i)$, we have
$$(9 + 2B_D)^{\log_3{2}}\leq 2(2^{\rank{E^D}} + 2^{\rank{E^{4D}}} + 2^{\rank{E^{64D}}}).$$ 
Denote by $\mu_{n,\pm}$ the lower density of $U_n^\pm$ in $\Sigma_n^\pm$, which is at least $1/2$ by \cite{BV}. We have
\begin{equation*}
 \mu_{n,\pm}\cdot \avg_{D\in U_n^\pm}(9 + 2 B_D)^{\log_3{2}}
+ (1 - \mu_{n,\pm})\cdot 3
 \:\leq\: 2(3+3+3)\:=\:18, 
\end{equation*}
by \cite[Thm.\ 13]{ABS}.
Therefore, $$\avg_{D\in U_n^\pm} (9 + 2 B_D)^{\log_3{2}}\leq 3 + 15 \mu_{n,\pm}^{-1}\leq 33.$$
Given the constraints (a) $B_D\in [0,3\cdot 2^t]$ for all $D \in U_n^\pm$, and (b) $\avg_{D\in U_n^\pm}(9+2B_D)^{\log_32}\leq 33$, the maximum of $\avg_{D\in U_n^\pm}(9+2B_D)$ is achieved, for $t \geq 4$, when $B_D$ attains its maximum value $3 \cdot 2^t$ for a density of $24/((9+3\cdot 2^{t+1})^{\log_32}-1 )$ of $D\in U_n^\pm$,  and $B_D=0$  otherwise.  It follows that a positive lower density of at least $1-O(2^{-t\log_32})$ quartic fields~$L$ with cubic resolvent field of discriminant $D\in U_n^\pm$ are non-monogenic. 

Thus if $t \geq 5$, 
the number of $L\in F_4$ which are not binary and are of absolute discriminant at most~$X$ is $\gg X$. By Proposition \ref{unob'}, these quartic fields are all locally unobstructed to being monogenic.
 By~\cite{BhargavaQuartics}, they give a positive proportion of all quartic fields when ordered by absolute discriminant, completing the proof.  \hfill {\;$\;\Box$\!\! \vspace{2 ex}}

\section{Proof of Theorems \ref{S4main} and \ref{thm:notbinary}: the cases $r= 0$ and $r = 4$}\label{sec:allsigs}

We  first treat  the case $r=0$.  
We use a result of the second author and Varma \cite[Cor.~10]{BV2}, which states that at least $\frac34$ of cubic fields in $F^+_3$ have a unit of mixed signature.  By Proposition \ref{prop:averagecount}, the proportion of cubic fields in $F_3^+$ with discriminant in $U_n^+$ is $\mu_n^+ \geq 1/2$.  Hence, at least $\frac12$ of cubic fields with $\Disc(K) \in U_n^+$ have a unit of mixed signature. For such cubic fields, two of the three corresponding quartic fields in $F_4^+$ are totally complex.  On the other hand, for $t\gg 1$, the proof of Theorem \ref{S4main} shows that a proportion of more than $1 - O(2^{-t \log_3{2}})$ of quartic fields in $F_4^+$ with cubic resolvent discriminant in $U_n^+$ are not binary. It follows that at least $\frac23\cdot \left(1 - \frac{1}{2} - O(2^{-t \log_3{2}})\right) = \frac{1}{3} - O(2^{-t \log_3{2}})$ of the quartic fields produced in the proof of Section \ref{sec:thm1proof} are totally complex, proving Theorem~\ref{S4main} in the case $r = 0$. (In fact, $t = 7$ suffices to make the latter density positive.)

Finally, we turn to the totally real case $r=4$.  
Let $\widetilde{\Sigma}_n^+$ be the set of integers of the form $D$ or $D/p_i^2$, for some $D \in \Sigma_n^+$ and some $1 \leq i \leq t$. 
Let $U_{n,1}^+$ be the subset of $D\in \Sigma_n^+$ such that $\#\Cl(\Q(\sqrt{\minus3D}))[3] = 3$ and such that not all of the $[\p_i]$ are cubes in $\Cl(\Q(\sqrt{\minus3D}))$. Let $\mu_{n,1}^+$
be the lower relative density of $U_{n,1}^+$
in $\Sigma_n^+$.
Write also $U_{n, \leq 1}^+ := U_n^+ \cup U_{n,1}^+$ and $\mu_{n,\leq 1}^+ := \mu_n^+ + \mu_{n,1}^+$.

\begin{lemma}\label{lem:classgrpprops}
We have 
$\mu_{n,\leq 1}^+\geq \frac{7}{8} - \frac{1}{2}\cdot 3^{-t}$.
\end{lemma}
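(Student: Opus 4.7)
The plan is to reformulate $U_{n,\leq 1}^+$ as a joint event on $\Cl(F)$ and $\Cl^S(F)$, and then bound its density by two applications of Markov's inequality, each controlled by an average of $3$-torsion over $\Sigma_n^+$. Set $F := \Q(\sqrt{\minus 3D})$ and $S := \{p_1,\ldots,p_t\}$; by definition of $\Sigma_n$ each $p_i$ splits in $\O_F$ as $\p_i\bar\p_i$, so $\Cl^S(F) := \Cl(F)/\langle [\p_i]\rangle$ is well defined. Applying the snake lemma to the short exact sequence $0\to\langle[\p_i]\rangle\to\Cl(F)\to\Cl^S(F)\to 0$ with multiplication by $3$, I claim
\[
U_{n,\leq 1}^+ \;=\; \bigl\{D\in\Sigma_n^+ : \#\Cl(F)[3]\leq 3 \text{ and } \Cl^S(F)[3]=0\bigr\}.
\]
Indeed, $D\in U_n^+$ (i.e., $\Cl(F)[3]=0$) trivially gives $\Cl^S(F)[3]=0$; and if $\#\Cl(F)[3]=3$, the snake-lemma sequence shows that $\Cl^S(F)[3]=0$ precisely when the image of $\langle[\p_i]\rangle$ in $\Cl(F)/3\Cl(F)\cong\F_3$ is nonzero, that is, precisely when some $[\p_i]$ is not a cube, which is the definition of $D\in U_{n,1}^+$.

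Next, I bound the two relevant averages over $\Sigma_n^+$. The cited bound $\mu_n^+\geq 1/2$ from Bhargava--Varma \cite{BV} is equivalent (via Markov applied to $\#\Cl(F)[3]$, which takes values in $\{1\}\cup\{3,9,27,\ldots\}$) to $\E[\#\Cl(F)[3]] \leq 2$ on $\Sigma_n^+$. Independently, the generalization of Klagsbrun's theorem proved in \S\ref{aux}---extending Klagsbrun's average formula for $\#\Cl^S(F)[3]$ by allowing finite local conditions at primes outside $S$---yields $\E[\#\Cl^S(F)[3]]\leq 1 + 3^{-t}$ on $\Sigma_n^+$.

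Combining the identity above with a union bound on the complement,
\[
1 - \mu_{n,\leq 1}^+ \;\leq\; \tfrac{1}{8}\bigl(\E[\#\Cl(F)[3]]-1\bigr) + \tfrac{1}{2}\bigl(\E[\#\Cl^S(F)[3]]-1\bigr) \;\leq\; \tfrac{1}{8} + \tfrac{1}{2}\cdot 3^{-t}.
\]
The first Markov bound uses $\#\Cl(F)[3]-1\geq 8\cdot\mathbf{1}_{\#\Cl(F)[3]\geq 9}$ (valid since the first value of $\#\Cl(F)[3]$ above $3$ is $9$), and the second uses $\#\Cl^S(F)[3]-1\geq 2\cdot\mathbf{1}_{\Cl^S(F)[3]\neq 0}$. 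Rearranging gives the claimed inequality.

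The main obstacle is the average bound $\E[\#\Cl^S(F)[3]]\leq 1+3^{-t}$ over $\Sigma_n^+$: Klagsbrun's original theorem does not accommodate the finite local conditions at $3$, $7$, and the $p_i$ that define $\Sigma_n^+$ out of the box, and the slight extension of his method supplied in \S\ref{aux} is precisely what makes the argument go through.
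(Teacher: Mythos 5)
Your proof is correct and takes essentially the same approach as the paper: both use the Bhargava--Varma average $\avg_{D\in\Sigma_n^+}\#\Cl(F)[3]=2$ together with the Klagsbrun-type bound $\avg\,\#\Cl(F)_S[3]\le 1+3^{-t}$ from Theorem~\ref{zevgen}, combined via Markov-type inequalities; your repackaging via the identity $U_{n,\leq 1}^+=\{D:\#\Cl(F)[3]\leq 3,\ \Cl_S(F)[3]=0\}$ and a union bound is a mild streamlining of the paper's route through the auxiliary set $\widetilde U_{n,1}^+$ and Corollary~\ref{cor:klag}. One small slip: $\mu_n^+\geq 1/2$ is not \emph{equivalent} to $\E[\#\Cl(F)[3]]\leq 2$ (Markov gives only one implication), but you only need the expectation bound, which is \cite[Cor.~4]{BV} and is what the paper itself invokes.
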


\begin{proof}
Let $\widetilde{U}_{n,1}^+\supset U_{n,1}^+$ be the subset of $D\in \Sigma_n^+$ such that $\#\Cl(\Q(\sqrt{\minus3D}))[3] = 3$, and let $\tilde{\mu}_{n,1}$ be the lower relative density of $\widetilde{U}_{n,1}^+$ in $\Sigma_n^+$. Then $\mu_{n,1}^+\leq \tilde{\mu}_{n,1}^+\leq \mu_{n,1}^+ + \frac{1}{2}\cdot 3^{-t}$ by Corollary~\ref{cor:klag} in \S\ref{aux}.  By \cite[Cor.\ $4$]{BV}, we have
$\avg_{D\in\Sigma_n^+}\#\Cl(\Q(\sqrt{\minus3D}))[3] = 2$, and 
hence
$$(\mu_n^+ + \tilde{\mu}_{n, 1}^+)\cdot 1 + (1 - \mu_n^+ - \tilde{\mu}_{n,1}^+)\cdot 9\leq 2.$$
We conclude that 
$\mu_n^+ + \tilde{\mu}_{n, 1}^+\geq \frac78$, and therefore 
$\mu_n^+ + \mu_{n,1}^+\geq \frac{7}{8} - \frac{1}{2}\cdot 3^{-t}$.
\end{proof}

By Proposition \ref{prop:averagecount}, the total number of cubic fields with discriminant in $\widetilde{\Sigma}_n^+$ and  corresponding $D \in \Sigma_n^+$ such that $D < X$, is 
\begin{equation}\label{eq:1}
    \sim \left(2^t+ t \cdot 2^{t-1}\right) X = 2^{t-1}(t+2)X.
\end{equation}
On the other hand, the total number of cubic fields with discriminant in $\widetilde{\Sigma}_n^+$, with corresponding $D \in U_{n, \leq 1}^+$ such that $D < X$, is at least
\begin{align}\label{eq:2}
\sim \left(\mu_n^+(t \cdot 2^{t-1} + 2^t)  + \mu_{n,1}^+(3 \cdot  2^{t-1})\right)X
&\geq 
\left(\mu_n^+ (t-1)2^{t-1}
+ (\mu_n^+ + \mu_{n,1}^+)(
3\cdot 2^{t-1})\right)X \nonumber\\
&\geq \left(\textstyle\frac12 (t-1)2^{t-1}+(\textstyle\frac78-\textstyle\frac12\cdot3^{-t})(3\cdot 2^{t-1})\right)X
\end{align}
by Propositions \ref{prop:r3=0}-\ref{prop:modHCL} and Lemma~\ref{lem:classgrpprops}. 
The lower density of (\ref{eq:2}) in (\ref{eq:1}) is thus at least
\begin{equation*}
\frac{\textstyle\frac12 (t-1)2^{t-1}+(\textstyle\frac78-\textstyle\frac12\cdot3^{-t})(3\cdot 2^{t-1})}
{2^{t-1}(t+2)}
 = \frac12 + \frac{9-4\cdot 3^{-t+1}}{8(t+2)}.
\end{equation*}
as $X\to\infty$. 

We may now conclude as in Section \ref{sec:thm1proof}. For $D \in \Sigma_n^+$, let $B_D$ be the number of monogenic quartic fields with cubic resolvent field of discriminant $D$ or $D/p_i^2$ for some $i$.  Then 
$$9(t+1) + 2B_D\leq 3\biggl(3^{\rank{E^D}} + 3^{\rank{E^{4D}}} + 3^{\rank{E^{64D}}}+\sum_{i=1}^t
\Bigl(3^{\rank{E^{D/p_i^2}}} + 3^{\rank{E^{4D/p_i^2}}} + 3^{\rank{E^{64D/p_i^2}}}\Bigr)\biggr)$$ and so
$$(9(t+1) + 2B_D)^{\log_32}\leq 2\biggl(2^{\rank{E^D}} + 2^{\rank{E^{4D}}} + 2^{\rank{E^{64D}}}+\sum_{i=1}^t
\Bigl(2^{\rank{E^{D/p_i^2}}} + 2^{\rank{E^{4D/p_i^2}}} + 2^{\rank{E^{64D/p_i^2}}}\Bigr)\biggr).$$
By Theorem \cite[Thm.\ 13]{ABS}, we have
\[
 \mu_{n,\leq 1}^+\cdot \avg_{D\in U_{n,\leq 1}^+}(9(t+1) + 2 B_D)^{\log_3{2}}
+ (1 - \mu^+_{n,\leq1})\cdot (3 (t+1))
 \:\leq\: 18 (t + 1).
\]
Hence, using Lemma \ref{lem:classgrpprops}, we have
\[\avg_{D\in U_{n,\leq 1}^+}(9(t+1) + 2 B_D)^{\log_3{2}} \:\leq\: (t+1)(15\cdot (\mu_{n,\leq 1}^+)^{-1} + 3) \:\leq\: 21(t + 1).\]
For $D \in U_{n,\leq 1}^+$, the maximum possible value of $B_D$ is $3\cdot 2^t + t\cdot 3\cdot 2^{t-1} = 3\cdot 2^{t-1}(t +2)$. Since (a) $B_D\in [0,3\cdot 2^{t-1}(t + 2)]$ for all $D \in U_{n,\leq 1}$, and (b) 
$\avg_{D\in U_{n,\leq 1}}(9(t+1)+2B_D)^{\log_32}\leq 21(t+1),$ the maximum of $\avg_{D\in U^+_{n,\leq1}}(9(t+1)+2B_D)$ is achieved when $B_D$ takes its maximum value $3 \cdot 2^{t-1}(t + 2)$ for a density of $21(t+1) (3(t+2) 2^{t-1})^{-\log_32}$ of $D\in U_{n, \leq 1}$,  and $B_D=0$  otherwise.

We conclude that the total number of binary quartic fields with cubic resolvent field of discriminant $D$ or $D/p_i^2$ for $D\in U_{n,\leq 1}^+$ and $D < X$ is $O(t^{2 - \log_3{2}}\cdot 2^{t\cdot (1 - \log_3{2})}X)$. Since, for each $D\in U_n^+\subset U_{n,\leq 1}^+$, there are at least $2^{t-1}(t+2)$ quartic fields with cubic resolvent field of discriminant $D$ or $D/p_i^2$ by Proposition \ref{prop:r3=0}, it follows that, ordering by the corresponding $D\in U_{n,\leq 1}^+$, the probability that such a quartic field 
is binary is $O(t^{1-\log_3{2}}\cdot 2^{-t\log_3{2}})$.

By \cite[Cor.\ 9]{BV2}, at least $\frac12$ of all cubic fields with discriminant in $\widetilde{\Sigma}_n^+$ have a totally positive non-square unit.  Hence a proportion of at least \[\frac12 + \frac{9-4\cdot 3^{-t+1}}{8(t+2)} - \frac12 
 \:=\: \frac{9-4\cdot 3^{-t+1}}{8(t+2)}\]
of the cubic fields with discriminant in $U_{n,\leq1}^+$ have a totally positive non-square unit, and hence give rise to a totally real quartic field.  The proportion of these which are not binary is at least $\frac{9}{8(t+3)} - O(t^{1 - \log_3{2}}\cdot 2^{-t \log_3{2}})$, which is positive for $t \gg 1$. 

\section{An auxiliary result on the average number of $3$-torsion elements in $S$-class groups}\label{aux}
We state a slight generalization of a result of Klagsbrun~\cite{Klagsbrun} on the average number of $3$-torsion elements in the $S$-class groups of quadratic fields, that allows specified local conditions at a  finite set of primes not in $S$, and which was used in the proof of Theorem~\ref{S4main}. 

For each
prime $p$, let $\Sigma_p$ be a set of isomorphism classes of  \'etale quadratic algebras over~$\Q_p$. The collection $(\Sigma_p)_p$ is {\it acceptable}  if, for all but finitely many $p$, the set
$\Sigma_p$ contains all \'etale quadratic algebras over $\Q_p$. Let $\Sigma$ denote the set of quadratic fields $F$,
up to isomorphism, such that $F\otimes_\Q \Q_p\in\Sigma_p$ for each $p$. 

\begin{theorem}\label{zevgen}
Let $S$ be a finite set of primes, and let $\Sigma=(\Sigma_p)_p$ be any acceptable set of quadratic fields defined by local conditions such that, for $p\in S$, the set  $\Sigma_p:=\{\Q_p\times\Q_p\}$ consists of just the unique split \'etale quadratic extension of $\Q_p$. Then, when quadratic fields with discriminants in $\Sigma$ are ordered by absolute discriminant:
\begin{enumerate}
\item[$(i)$] the average size of $\Cl(F)_S[3]$, as $F$ ranges over imaginary quadratic fields in $\Sigma$, is equal to $1 + 3^{-|S|};$  and
\item[$(ii)$] the average size of $\Cl(F)_S[3]$, as $F$ ranges over real quadratic fields in $\Sigma$, is equal to $1 + 3^{-|S|-1}$.
\end{enumerate}
\end{theorem}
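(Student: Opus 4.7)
The plan is to follow Klagsbrun's approach of reflecting the $3$-torsion of $S$-class groups to $S_3$-cubic fields and then applying the Davenport--Heilbronn--Bhargava--Shankar--Tsimerman counting machinery, while incorporating the additional local conditions at primes outside $S$ directly into the sieve. The key observation is that in the Euler-product expansion of the average, the local factors at primes $p \notin S$ cancel exactly between the weighted count of $3$-torsion elements and the count of $F \in \Sigma$, so that Klagsbrun's per-prime contributions at primes in $S$ survive unchanged.

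I would begin by invoking the standard reflection identity: for a quadratic field $F$, the nontrivial elements of $\Cl(F)_S[3]$ modulo inversion biject with isomorphism classes of $S_3$-cubic fields $K/\Q$ whose quadratic resolvent is $F$ and whose discriminant agrees with $\Disc(F)$ up to primes in $S$. Thus
\[
\sum_{\substack{F \in \Sigma \\ |\Disc(F)| < X}} |\Cl(F)_S[3]| \;=\; \#\Sigma(X) \;+\; 2\cdot N(\Sigma, X),
\]
where $N(\Sigma, X)$ counts such cubic fields $K$ with $|\Disc(F_K)| < X$. The local conditions defining $\Sigma$ at primes $p \notin S$ translate to local conditions on the splitting type of $p$ in $K$ (via the compatibility $F_K \otimes \Q_p \in \Sigma_p$), and the resulting family of local conditions on cubic fields is acceptable in the Bhargava--Shankar--Tsimerman sense because the original $\Sigma$ is.

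I would then apply the counting theorem for cubic fields with local conditions to obtain $N(\Sigma,X)/X$ and $\#\Sigma(X)/X$ as asymptotic Euler products. Dividing, the local factors at primes $p \notin S$ cancel: for each admissible choice of $\Sigma_p$, the local density of cubic fields with quadratic resolvent algebra in $\Sigma_p$ is, up to the appropriate normalization coming from the Delone--Faddeev parametrization, exactly the local density of $F \otimes \Q_p \in \Sigma_p$. The surviving factors come from the primes of $S$ (and, in the real case, the archimedean place, via Bhargava--Varma-type signature refinements), and a direct computation --- identical to Klagsbrun's in the unconditioned case --- shows that these factors combine to give $1 + 3^{-|S|}$ in the imaginary case and $1 + 3^{-|S|-1}$ in the real case.

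The main obstacle will be the case analysis verifying the cancellation at each $p \notin S$: one must check, for each admissible $\Sigma_p$ (determined by which subset of the four étale quadratic $\Q_p$-algebras it contains), that the local density ratio is indeed independent of $\Sigma_p$. This reduces to a finite local computation using the explicit Delone--Faddeev densities over $\Q_p$. A secondary technical point is the uniform tail estimate needed to justify passing from finitely many local conditions to the full acceptable family; this follows from the uniform bounds of Taniguchi--Thorne on counts of cubic fields with prescribed local behavior at large primes.
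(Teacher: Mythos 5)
Your proposal follows essentially the same route as the paper: use Klagsbrun's reflection to cubic fields, count via Davenport--Heilbronn/Bhargava--Shankar--Tsimerman with local conditions imposed as in Bhargava--Varma, and observe that the Euler factors at primes outside $S$ cancel between numerator and denominator so that only Klagsbrun's contributions at $S$ (and the archimedean place) survive. The paper's proof is exactly this, phrased as a citation to Klagsbrun for the factors at $p\in S$ and to the cancellation in \cite[Proof of Cor.~4, \S 5.4]{BV} for the factors at $p\notin S$; your identification of the two technical points to verify (per-prime cancellation and uniform tail estimates) matches what those references supply.
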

\begin{proof}The proof proceeds exactly as in \cite{Klagsbrun}, but using the proof method of \cite{BV} to impose additional local conditions at primes not in $S$. All Euler  factors at primes $p\in S$ are identical to those in the proof of  \cite[Theorem~2]{Klagsbrun}, while all Euler factors at primes $p\notin S$ also remain unchanged due to the  identical cancellation as explained in \cite[Proof of Corollary~4, \S5.4]{BV}.  Thus the averages in \cite[Theorem~2]{Klagsbrun} remain the same  even upon the imposition of local conditions at additional primes not in $S$. 
\end{proof}

\begin{corollary}\label{cor:klag}
Write $F = \Q(\sqrt{\minus3D})$.  The proportion of $D \in \Sigma_n^\pm$ such that for each $1\leq i\leq t$, the primes $[\p_i]$ are cubes in $\Cl(F)$, and such that $\#\Cl(F)[3] = 3$, is at most $\frac{1}{2}\cdot 3^{-t}$.
\end{corollary}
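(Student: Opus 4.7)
The plan is to invoke Theorem~\ref{zevgen} with $S=\{p_1,\dots,p_t\}$, applied to the family of fields $F=\Q(\sqrt{d})$ arising from $D=-27dn^2\in\Sigma_n^\pm$ (noting $\Q(\sqrt{-3D})=\Q(\sqrt d)$). The definition of $\Sigma_n$ imposes only finitely many local conditions (at $2$, at $7$, at each $p_i\mid n$, and at $\infty$), so the associated set of $F$ forms an acceptable family in the sense of Theorem~\ref{zevgen}; moreover, the condition $\bigl(\tfrac d{p_i}\bigr)=1$ forces $p_i$ to split in $F$, matching the hypothesis $\Sigma_{p_i}=\{\Q_{p_i}\times\Q_{p_i}\}$. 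Theorem~\ref{zevgen} therefore yields
\[
\avg_{D\in\Sigma_n^\pm}\#\Cl(F)_S[3]\ \le\ 1+3^{-t},
\]
with value $1+3^{-t}$ in the imaginary case $(+)$ and the stronger value $1+3^{-t-1}$ in the real case~$(-)$.

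The key structural step is then to show that if $D$ satisfies both conditions of the corollary, then $\#\Cl(F)_S[3]\ge 3$. Since $\#\Cl(F)[3]=3$, the $3$-Sylow $A\subseteq\Cl(F)$ is cyclic. Let $H\subseteq\Cl(F)$ be the subgroup generated by $[\p_1],\dots,[\p_t]$, so $\Cl(F)_S\cong\Cl(F)/H$; decomposing $\Cl(F)$ into its $3$-primary and prime-to-$3$ parts, one has $\Cl(F)_S[3]=(A/H_{(3)})[3]$, where $H_{(3)}$ denotes the $3$-part of $H$. The hypothesis that every $[\p_i]$ is a cube in $\Cl(F)$ translates to $H_{(3)}\subseteq 3A$, whence $A/H_{(3)}$ surjects onto $A/3A\cong\Z/3\Z$. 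Since $A/H_{(3)}$ is a nontrivial cyclic $3$-group, its $3$-torsion has order exactly~$3$.

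Combining these two inputs: letting $\rho$ denote the density in question, the lower bound $\#\Cl(F)_S[3]\ge 3$ on the set of density $\rho$ and the trivial bound $\#\Cl(F)_S[3]\ge 1$ elsewhere give
\[
1+2\rho\ \le\ \avg_{D\in\Sigma_n^\pm}\#\Cl(F)_S[3]\ \le\ 1+3^{-t},
\]
so $\rho\le\tfrac{1}{2}\cdot 3^{-t}$, as desired. The main obstacle is the bookkeeping in the second paragraph: the hypothesis $\#\Cl(F)[3]=3$ controls only the $3$-torsion of $A$, not its full order, but the cyclicity of $A$ that this does imply is exactly what is needed to conclude that $A/H_{(3)}$ has exactly three $3$-torsion elements once $H_{(3)}\subseteq 3A$. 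Everything else is a direct application of the generalized Klagsbrun estimate.
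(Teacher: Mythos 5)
Your proposal is correct and follows the same route as the paper: verify that the hypotheses force $\#\Cl(F)_S[3]=3$ (the paper asserts this tersely, you spell out the structure-theoretic bookkeeping with the cyclic $3$-Sylow), then apply a Markov-type bound to the first moment from Theorem~\ref{zevgen}. Your explicit argument that $H_{(3)}\subseteq 3A$ implies $A/H_{(3)}$ is a nontrivial cyclic $3$-group is exactly the reasoning the paper's proof leaves implicit.
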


\begin{proof}
Let $S := \{p_1, \ldots, p_t\}$. 
Since $|\Cl(F)_S[3]| = |\Cl(F)[3]|=3$ and 
$\avg_{D\in \Sigma_n^{\pm}}|\Cl(\Q(\sqrt{D})_S[3]|$ 
is at most $1 + 3^{-t}$ by Theorem \ref{zevgen}, it follows that
\begin{equation*}
\Prob_{D\in \Sigma_n^\pm}(\#\Cl(F)_S[3] = \#\Cl(F)[3] = 3)\:\leq\: \avg_{D\in \Sigma_n^\pm}\, \frac{|\Cl(F)_S[3]| - 1}{2}\:\leq\: \frac{1}{2}\cdot 3^{-t}.
\end{equation*}
\end{proof}

\bibliographystyle{abbrv}
\bibliography{references}

\end{document}